\newtheorem{thm}{Theorem}
\newtheorem{lem}{Lemma}
\newcommand{\expect}[1]{\mathbb{E}\left\{#1\right\}}
\newcommand{\defequiv}{\mbox{\raisebox{-.3ex}{$\overset{\vartriangle}{=}$}}}
\begin{document}

\title{Opportunistic Cooperation in Cognitive Femtocell Networks}

\author{Rahul~Urgaonkar, Michael~J.~Neely \\
 \IEEEcompsocitemizethanks{
\IEEEcompsocthanksitem Rahul Urgaonkar and Michael J. Neely are with the Department
of Electrical Engineering, University of Southern California, Los Angeles, CA
90089. E-mail: urgaonka@usc.edu, mjneely@usc.edu, Web: http://www-scf.usc.edu/$\sim$urgaonka
\IEEEcompsocthanksitem  This material is supported in part by one or more of the following: the DARPA IT-MANET program grant W911NF-07-0028, the NSF
Career grant CCF-0747525, and continuing through participation in the Network Science Collaborative Technology Alliance sponsored by
the U.S. Army Research Laboratory.}
}

\maketitle
\begin{abstract}
We investigate opportunistic cooperation between unlicensed secondary users and
legacy primary users in a cognitive radio network. Specifically, we consider
a model of a cognitive network where a secondary user can cooperatively
transmit with the primary user in order to improve the latter's effective transmission rate.
In return, the secondary user gets more opportunities for transmitting its own data when
the primary user is idle. This kind of interaction between the primary and secondary users
is different from the traditional \emph{dynamic spectrum access} model in which
the secondary users try to avoid interfering with the primary users while 
seeking transmission opportunities on vacant primary channels. 
In our model, the secondary users need to balance the desire to cooperate
more (to create more transmission opportunities) with the need for maintaining sufficient energy levels for their
own transmissions. Such a model is applicable in the emerging area of cognitive femtocell networks.
We formulate the problem of maximizing the secondary user throughput
subject to a time average power constraint under these settings. This is a constrained
Markov Decision Problem and conventional solution techniques based on dynamic programming require either 
extensive knowledge of the system dynamics or learning based approaches that suffer from large convergence times.
However, using the technique of Lyapunov optimization, we design a novel \emph{greedy} and
\emph{online} control algorithm that overcomes these challenges and is provably optimal.
\end{abstract}

\begin{keywords}
Resource Allocation, Opportunistic Cooperation, 
Cognitive Radio, Femtocell Networks, Optimal Control
\end{keywords}

\section{Introduction}
\label{section:femto_intro}

Much prior work on resource allocation in
cognitive radio networks has focused on the \emph{dynamic spectrum access} model
\cite{cognitive-survey, zhao-survey}
in which  the secondary users seek transmission opportunities for their packets on vacant primary channels in frequency, 
time, or space. 
Under this model, the primary users are assumed to be oblivious of the presence of the secondary users and 
transmit whenever they have data to send. 
Secondly, a collision model is assumed
for the physical layer in which if a secondary user transmits on a busy primary channel, then there is a 
collision and both packets are lost. 
We considered a similar model in our prior work
\cite{CNC} where the objective was 
to design an opportunistic scheduling policy for the secondary users that maximizes their throughput utility while
providing tight reliability guarantees on the maximum number of collisions suffered by a primary user over \emph{any} 
given time interval. We note that this formulation does not consider the possibility of any cooperation between the primary and secondary users. 
Further, it assumes that the secondary user activity does not affect the primary user channel occupancy process.

There is a growing body of work that investigates alternate models for the interaction between the primary and secondary 
users in a cognitive radio network. In particular, the idea of cooperation at the physical layer has been considered from an
information-theoretic perspective in many works (see \cite{goldsmith} and the references therein). 
These are motivated by the work on the classical interference and relay channels 
\cite{carleial, han, cover_gamal, cover}.
The main idea in these works is that the resources of the secondary user can be utilized to improve the performance of the primary 
transmissions. In return, the secondary user can obtain more transmission opportunities
for its own data when the primary channel is idle.

These works mainly treat the problem from a physical layer/information-theoretic perspective and 
do not consider upper layer issues such as queueing dynamics, higher priority for primary user, etc.
Recent work that addresses some of these issues includes \cite{SBNS07, simeone, ZZ09, kriki, brong}. 
Specifically, \cite{SBNS07} considers the scenario where the secondary user acts as a relay for those
packets of the primary user that it receives successfully but which are not received by the primary destination.
It derives the stable throughput of the secondary user under this model. \cite{simeone, ZZ09} use a
Stackelberg game framework to study spectrum leasing strategies in \emph{cooperative cognitive radio networks} where
the primary users lease a portion of their licensed spectrum to secondary users in return for cooperative relaying.
\cite{kriki, brong} study and compare different physical layer strategies for relaying in such
cognitive cooperative systems. 
An important consequence of this interaction between the primary and secondary users 
is that the secondary user activity can now potentially 
influence the primary user channel occupancy process. However, there has been little work in studying this
scenario. Exceptions include the work in \cite{marco}
that considers a two-user setting where collisions caused by the opportunistic transmissions of the secondary user result in 
retransmissions by the primary user.


In this paper, we study the problem of opportunistic cooperation in cognitive networks from a \emph{network utility maximization} perspective,
specifically taking into account the above mentioned higher-layer aspects. 
To motivate the problem and illustrate the design issues involved, we first consider a simple network consisting of 
one primary and one secondary user and their respective access points in Sec. \ref{section:femto_basic}. 
This can model a practical scenario of recent interest, namely a cognitive femtocell \cite{gur, JL10}, as
discussed in Sec. \ref{section:femto_basic}. We assume that the secondary user can cooperatively transmit
with the primary user to increase its transmission success probability. 
In return, the secondary user can get more opportunities for transmitting its own data when the
primary user is idle. We formulate the problem of maximizing the secondary user throughput subject
to time average power constraints in Sec. \ref{section:femto_objective}. 

Unlike most of the prior work on resource allocation in cognitive radio networks,
the evolution of the system state for this problem depends on the control actions taken by the secondary user. 
Here, the system state refers to the channel occupancy state of the primary user.
Because of this dependence, this problem becomes a constrained Markov Decision Problem (MDP) and
the greedy ``drift-plus-penalty'' minimization technique of Lyapunov optimization \cite{neely-NOW}
that we used in \cite{CNC} is no longer optimal.
Such problems are typically tackled using Markov Decision Theory and Dynamic Programming \cite{altman, bertsekas}. 
For example, \cite{marco} uses these
tools to derive structural results on optimal channel access strategies
in a similar two-user setting where collisions caused by the opportunistic transmissions of the secondary user cause the
primary user to retransmit its packets. However, this approach requires either extensive knowledge of the dynamics of
the underlying network state (such as state transition probabilities) or learning based approaches that suffer from large convergence times.


Instead, in Sec. \ref{section:solution}, we use the recently developed framework of maximizing the \emph{ratio} of the expected total reward over 
the expected length of a renewal frame \cite{chih-ping, asilomar10, neely_new} to design a control algorithm. 
This framework extends the classical Lyapunov optimization method \cite{neely-NOW} to tackle a 
more general class of MDP problems where the system evolves over renewals and where the length of a renewal frame can be
affected by the control decisions during that period.
The resulting solution has the following structure: Rather than minimizing a ``drift-plus-penalty'' term every slot, it minimizes a 
``drift-plus-penalty ratio'' over each renewal frame. This can be achieved by solving a sequence of unconstrained 
\emph{stochastic shortest path} (SSP) problems and implementing the solution over every renewal frame.

While solving such SSP problems can be simpler than the original constrained MDP, it may still require knowledge of
the dynamics of the underlying network state. Learning based techniques for solving such problems 
by sampling from the past observations have been considered in \cite{neely_mdp}.
However, these may suffer from large convergence times. Remarkably, in Sec. \ref{section:solving},
we show that for our problem, the ``drift-plus-penalty ratio'' method results in an online control 
algorithm that \emph{does not require any knowledge of the network dynamics or explicit learning}, yet is optimal. 
In this respect, it is similar to the traditional greedy ``drift-plus-penalty''
minimizing algorithms of \cite{neely-NOW}. We then extend the basic model to incorporate multiple secondary users as well as time-varying channels in
Sec. \ref{section:femto_extensions}. Finally, we present simulation results in Sec. \ref{section:femto_sim}.


\section{Basic Model}
\label{section:femto_basic}


\begin{figure}
\centering
\includegraphics[width=7cm, height=4cm]{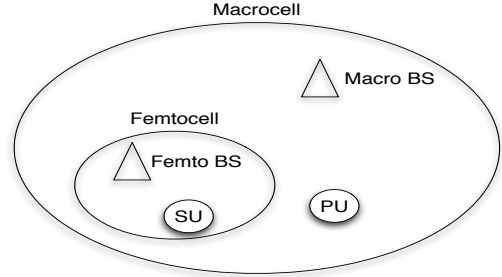}
\caption{Example femtocell network with primary and secondary users.}
\label{fig:femto}
\end{figure}

We consider a network with one primary user (PU), one secondary user (SU) and their respective base stations (BS). 
The primary user is the licensed owner of the channel while the secondary user tries to send its own
data opportunistically when the channel is not being used by the
primary user. This model can capture a femtocell scenario where the primary user is a legacy mobile user
that communicates with the macro base station over licensed spectrum (Fig. \ref{fig:femto}).
The secondary user is the femtocell user that
does not have any licensed spectrum of its own and tries to send data opportunistically to the femtocell base station
over any vacant licensed spectrum. 
Similar models of \emph{cooperative cognitive radio networks} have been considered in \cite{SBNS07, simeone, ZZ09, kriki, brong}. 
This can also model a single server queueing system with two classes of arrivals where one class has a strictly higher priority
over the other class.

We consider a time-slotted model. We assume that the system operates over a frame-based structure.
Specifically, the timeline can be divided into
successive non-overlapping frames of duration $T[k]$ slots where $k \in \{1, 2, 3, \ldots \}$ represents the frame number (see Fig. \ref{fig:renewal}). 
The start time of frame $k$ is denoted by $t_k$ with $t_1 = 0$.
The length of frame $k$ is given by $T[k] \defequiv t_{k+1} - t_k$. 
For each $k$, the frame length $T[k]$ is a random function of 
the control decisions taken during that frame.
Each frame can be further divided into two periods: PU Idle and PU Busy. 
The ``PU Idle'' period corresponds to the slots when the primary user does not have any
packet to send to its base station and is idle.
The ``PU Busy'' period corresponds to the slots when the primary user is transmitting
its packets to its base station over the licensed spectrum. 
As shown in Fig. \ref{fig:renewal}, 
every frame starts with the ``PU Idle'' period which is followed by the ``PU Busy''
period and ends when the primary user becomes idle again. 
In the basic model, we assume that the primary user receives new packets every slot according to an i.i.d. Bernoulli arrival 
process $A_{pu}(t)$ with rate $\lambda_{pu}$ packets/slot. 
This means that the length of the ``PU Idle'' period of any frame 
is a geometric random variable with parameter $\lambda_{pu}$. However, the
length of the ``PU Busy'' period depends on the secondary user control decisions as
discussed below.

In any slot $t$, if the primary user has a non-zero queue backlog, it transmits one packet
to its base station. We assume that the transmission of each packet takes one slot.
If the transmission is successful, the packet is removed from the primary user queue. 
However, if the transmission fails, the packet is retained in the queue for future retransmissions.
The secondary user cannot transmit its packets when the channel is being used by the primary user. It can
transmit its packets only during the ``PU Idle'' period of the frame and must stop its transmission whenever the primary user becomes active again.
However, the secondary user can transmit cooperatively with the primary user in the ``PU Busy'' period to increase its 
transmission success probability.
This has the effect of decreasing the expected length of the ``PU Busy'' period.
In order to cooperate,  the secondary user must allocate its power resources to help relay the primary user packet.
This cooperation can take place in several ways depending on the cooperative protocol being used (see \cite{kriki} for some examples). 
In this simple model, these details are captured by the resulting probability of successful transmission.

\begin{figure}
\centering
\includegraphics[width=8.5cm]{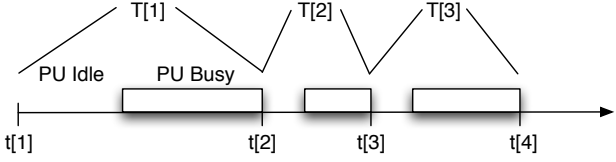}
\caption{Frame-based structure of the problem under consideration. Each frame consists of two periods: PU Idle and PU Busy.}
\label{fig:renewal}
\end{figure}

The reason why the secondary user may want to cooperate is because this can potentially increase the
number of time slots in the future in which the primary user does not have any
data to send as compared to a non-cooperative strategy. This can 
create more opportunities for the secondary user to transmit its own packets. 
However, note that the trivial strategy of cooperating whenever possible may lead to a scenario where the
secondary user does not have enough power for its own data transmission. 
Thus, the secondary user needs to decide whether it should cooperate or not considering these two
opposing factors.

The probability of a successful primary transmission 
depends on the control actions such as power allocation and cooperative transmission
decisions by the secondary user. This is discussed in detail in the next section.
In this model, we assume that the network controller cannot control the primary user
actions. However, it can control the secondary user decisions on cooperation and the associated 
power allocation.

\subsection{Control Decisions and Queueing Dynamics}
\label{section:femto_decisions}

Let $Q_{pu}(t), Q_{su}(t) \in \{ 0, 1, 2, \ldots \}$ represent the primary and 
secondary user queues respectively in slot $t$. 
New packets arrive at the secondary user according to an i.i.d. process 
$A_{su}(t)$ of rate $\lambda_{su}$ packets/slot respectively. We assume that there exists a finite
constant $A_{max}$ such that $A_{su}(t) \leq A_{max}$ for all $t$.
Every slot, an admission control decision determines $R_{su}(t)$, the number of new packets to admit into
the secondary user queue. Further, every slot, depending on whether the primary user is busy or idle,
resource allocation decisions are made as follows.
When $Q_{pu}(t) > 0$, this represents the secondary user decision on cooperative transmission and the 
corresponding power allocation $P_{su}(t)$. When $Q_{pu}(t) = 0$, this corresponds to the secondary user decision 
on its own transmission and the corresponding power allocation $P_{su}(t)$.
We assume that in each slot, the secondary user can choose its power allocation $P_{su}(t)$ from a set $\mathcal{P}$ of
possible options. Further, this power allocation is subject to a long-term average power constraint $P_{avg}$ and
an instantaneous peak power constraint $P_{max}$. 
For example, $\mathcal{P}$ may contain only two options $\{0, P_{max}\}$ which represents
``Remain Idle'' and ``Cooperate/Transmit at Full Power''. As another example, $\mathcal{P} = [0, P_{max}]$ such that
$P_{su}(t)$ can take any value between $0$ and $P_{max}$.


Suppose the primary user is active in slot $t$ and the secondary user allocates power $P(t)$ for cooperative transmission. 
Then the random success/failure outcome of the primary transmission is given by an indicator variable $\mu_{pu}(P(t))$ and the 
success probability is given by $\phi(P(t)) = \expect{\mu_{pu}(P(t))}$. The function $\phi(P)$ is known to the network controller
and is assumed to be non-decreasing in $P$.
However, the value of the random outcome $\mu_{pu}(P(t))$ may not be known beforehand. Note that setting $P(t) = 0$
corresponds to a non-cooperative transmission and the success probability for this case becomes $\phi(0)$ and we denote this by 
$\phi_{nc}$. Likewise, we denote $\phi(P_{max})$ by $\phi_c$. Thus, $\phi_{nc} \leq \phi(P(t)) \leq \phi_c$ for all $P(t) \in \mathcal{P}$.

We assume that $\lambda_{pu}$ is such that it can be supported even when the secondary user never cooperates, i.e., 
$\lambda_{pu} < \phi_{nc}$. This means that the primary user queue is stable even if there is no cooperation.
Further, for all $k$, the frame length $T[k] \geq 1$ and there exist finite constants $T_{min}, T_{max}$ such that
under all control policies, we have:
\begin{align*}
1 \leq T_{min} \leq \expect{T[k]} \leq T_{max} 
\end{align*}
Specifically, $T_{min}$ can be chosen to be the expected frame length when the secondary user always cooperates with 
full power while $T_{max}$ can be chosen to be the expected frame length when the secondary user never cooperates.
Using  Little's Theorem, we have that: 
\begin{align*}
\frac{T_{min}}{T_{min} + 1/\lambda_{pu}} = \frac{\lambda_{pu}}{\phi_{c}}
\end{align*}
Similarly, we have: 
\begin{align*}
\frac{T_{max}}{T_{max} + 1/\lambda_{pu}} = \frac{\lambda_{pu}}{\phi_{nc}}
\end{align*}
Using these, we have:
\begin{align}
T_{min} \defequiv \frac{\phi_c}{(\phi_{c} - \lambda_{pu})\lambda_{pu}}, \;
T_{max} \defequiv \frac{\phi_{nc}}{(\phi_{nc} - \lambda_{pu})\lambda_{pu}} 
\label{eq:t_min_max}
\end{align}
Finally, there exists a finite constant $D$ such that the expectation of the second moment of a frame size, 
$\expect{T^2[k]}$, satisfies the following for all $k$, regardless of the policy:
\begin{align}
\expect{T^2[k]} \leq D
\label{eq:second_moment}
\end{align}
This follows from the assumption that the primary user queue is stable even if there is no cooperation.
In Appendix C, we exactly compute such a $D$ that satisfies (\ref{eq:second_moment}).

When the primary user is idle in slot $t$ and the secondary user allocates power $P(t)$ for its own transmission, 
it gets a service rate given by $\mu_{su}(P(t))$. 
This can represent the success probability of a secondary transmission with a Bernoulli service process. This 
can also be used to model more general service processes.
We assume that there exists a finite constant $\mu_{max}$ such that
$\mu_{su}(P) \leq \mu_{max}$ for all $P \in \mathcal{P}$.

Given these control decisions, the primary and secondary user queues evolve as follows:
\begin{align}
Q_{pu}(t+1) = \max[Q_{pu}(t) - \mu_{pu}(P(t)), 0] + A_{pu}(t)
\label{eq:ch4_queues1} \\
Q_{su}(t+1) = \max[Q_{su}(t) - \mu_{su}(P(t)), 0] + R_{su}(t)
\label{eq:ch4_queues2}
\end{align}
where $R_{su}(t) \leq A_{su}(t)$.

\subsection{Control Objective}
\label{section:femto_objective}

Consider any control algorithm that makes admission control decision $R_{su}(t)$ and power allocation $P(t)$ every slot 
subject to the constraints described in
Sec. \ref{section:femto_decisions}. Note that if the primary queue backlog $Q_{pu}(t) > 0$, then this power is used for cooperative
transmission with the primary user. If $Q_{pu}(t) = 0$, then this power is used for the secondary user's
own transmission. Define the following time-averages under this algorithm:
\begin{align*}
& \overline{R}_{su} \defequiv \lim_{t\rightarrow\infty} \frac{1}{t} \sum_{\tau = 0}^{t-1} \expect{R_{su}(\tau)} \\
& \overline{P}_{su} \defequiv \lim_{t\rightarrow\infty} \frac{1}{t} \sum_{\tau = 0}^{t-1} \expect{P(\tau)} \\
& \overline{\mu}_{su} \defequiv \lim_{t\rightarrow\infty} \frac{1}{t} \sum_{\tau = 0}^{t-1} \expect{\mu_{su}(P(\tau))}
\end{align*}
where the expectations above are with respect to the potential randomness of the control algorithm.
Assuming for the time being that these limits exist,
our goal is to design a joint admission control and power allocation
policy that maximizes the throughput of the secondary user subject
to its average and peak power constraints and the scheduling constraints imposed by 
the basic model. 
Formally, this can be stated as a stochastic optimization problem as follows:
\begin{align}
\textrm{Maximize:} \;\; & \overline{R}_{su} \nonumber \\ 
\textrm{Subject to:} \;\; & 0 \leq R_{su}(t) \leq A_{su}(t) \; \forall t \nonumber \\
& P(t) \in \mathcal{P} \; \forall t \nonumber \\
& \overline{R}_{su} \leq \overline{\mu}_{su} \nonumber\\
& \overline{P}_{su} \leq P_{avg}
\label{eq:cmdp}
\end{align}
It will be useful to define the primary queue backlog $Q_{pu}(t)$ as the ``state'' for this control problem.
This is because the state of this queue (being zero or nonzero) affects the control options as described before. 
Note that the control decisions on cooperation affect the dynamics of this queue. Therefore, 
problem (\ref{eq:cmdp}) is an instance of a constrained Markov decision problem \cite{altman}.
 It is well known that in order to obtain an optimal control policy, it is sufficient to consider only the class of stationary, randomized
policies that take control actions only as a function of the current system state (and independent of past history).
A general control policy in this class is characterized by a stationary probability distribution over the control
action set for each system state. 
Let $\upsilon^*$ denote the optimal value of the objective in (\ref{eq:cmdp}). Then using
standard results on constrained Markov Decision problems 
\cite{altman, puterman, bertsekas2}, we have the following:

\begin{lem} (Optimal Stationary, Randomized Policy): 
There exists a stationary, randomized policy \emph{STAT} that takes control decisions $R_{su}^{stat}(t), P_{su}^{stat}(t)$
every slot purely as a (possibly randomized) function of the current state $Q_{pu}(t)$ while satisfying the
constraints $R_{su}^{stat}(t) \leq A_{su}(t), P_{su}^{stat}(t) \in \mathcal{P}$ for all $t$ and
provides the following guarantees:
\begin{align}
& \overline{R}_{su}^{stat} = \upsilon^* \label{eq:femto_stat1} \\
& \overline{R}_{su}^{stat} \leq \overline{\mu}_{su}^{stat} \label{eq:femto_stat2}\\
& \overline{P}_{su}^{stat} \leq P_{avg} \label{eq:femto_stat3}
\end{align}
where $\overline{R}_{su}^{stat}, \overline{\mu}_{su}^{stat}, \overline{P}_{su}^{stat}$ denote the
time-averages under this policy.
\label{lem:femto_one}
\end{lem}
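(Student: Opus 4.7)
The plan is to obtain Lemma~\ref{lem:femto_one} from the standard theory of constrained average-reward Markov decision processes on a countable state space, taking the system state to be the primary queue backlog $Q_{pu}(t)$. The per-slot action pair $(R_{su}(t), P(t))$ has an admissible range that depends only on the indicator $\mathbf{1}\{Q_{pu}(t) > 0\}$, so any stationary randomized policy conditioning on $Q_{pu}(t)$ is parameterized by just two action distributions. The assumption $\lambda_{pu} < \phi_{nc}$ ensures that the PU queue has a strictly negative expected drift under every admissible policy, so the $Q_{pu}$-chain is positive recurrent with expected return time to $0$ bounded by $T_{max}$; this, together with the bounded second moment (\ref{eq:second_moment}), gives the uniform integrability needed to apply the ergodic/renewal-reward arguments below.

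First, I would let $\Lambda$ denote the set of achievable time-average triples $(\overline{R}_{su}, \overline{\mu}_{su}, \overline{P}_{su})$ over all admissible policies, and $\Lambda_S$ the corresponding set over stationary randomized policies that condition only on $Q_{pu}(t)$. The inclusion $\Lambda_S \subseteq \Lambda$ is immediate. For the reverse inclusion, given any admissible policy, positive recurrence lets one extract a subsequence along which the long-run empirical state-action frequencies $\pi(q,a)$ exist; these induce a stationary randomized policy that chooses action $a$ in state $q$ with probability proportional to $\pi(q,a)$, and by the Cesaro form of the renewal-reward theorem applied to the frames of Sec.~\ref{section:femto_basic}, this induced policy achieves the same time-averaged reward and costs as the original.

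Next, I would observe that $\Lambda_S$ is convex (mixing two stationary randomized policies yields another whose achievable averages are the convex combination of the originals) and compact (per-slot actions satisfy $R_{su}(t) \leq A_{max}$ and $P(t) \in \mathcal{P}$ with $\mathcal{P}$ bounded by $P_{max}$, and the induced stationary distribution on $Q_{pu}$ depends continuously on the randomization probabilities, uniformly through (\ref{eq:t_min_max})). The feasible region obtained by intersecting $\Lambda_S$ with the linear half-spaces $\overline{R}_{su} \leq \overline{\mu}_{su}$ and $\overline{P}_{su} \leq P_{avg}$ is therefore compact, so the linear objective $\overline{R}_{su}$ attains its supremum $\upsilon^*$ at some policy STAT, which by construction satisfies (\ref{eq:femto_stat1})--(\ref{eq:femto_stat3}).

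The main obstacle is the reverse inclusion $\Lambda \subseteq \Lambda_S$: showing rigorously that an arbitrary, possibly history-dependent admissible policy's long-run behavior is matched by a state-dependent stationary randomized one. This is the delicate step in constrained MDP theory, typically handled either by working in the LP formulation over occupation measures or by a direct construction from empirical frequencies, both of which rely on countable-state hypotheses (positive recurrence under every stationary policy and uniform integrability of one-step costs) that we have verified from $\lambda_{pu} < \phi_{nc}$, $A_{su}(t) \leq A_{max}$, (\ref{eq:t_min_max}), and (\ref{eq:second_moment}). The cleanest route is to invoke Theorem~4.1 of Altman once these hypotheses are checked, as suggested by the references cited immediately preceding the lemma.
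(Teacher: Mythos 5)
Your proposal is correct and takes essentially the same route as the paper, which offers no explicit proof of Lemma~\ref{lem:femto_one} and instead asserts it directly from standard results on constrained Markov decision problems (Altman, Puterman, Bertsekas--Tsitsiklis); your sketch of the occupation-measure/achievable-region argument, with the hypotheses (positive recurrence from $\lambda_{pu} < \phi_{nc}$, bounded actions, and the second-moment bound (\ref{eq:second_moment})) checked before invoking Altman's theorem, is precisely the intended justification.
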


We note that the conventional techniques to solve (\ref{eq:cmdp}) that are based
on dynamic programming \cite{bertsekas} require either extensive knowledge of
the system dynamics or learning based approaches that suffer
from large convergence times. Motivated by the
recently developed extension to the technique of Lyapunov optimization in \cite{chih-ping, asilomar10, neely_new}, 
we take an different approach to this problem in the next section.

\section{Solution Using The ``Drift-plus-Penalty'' Ratio Method}
\label{section:solution}

Recall that the start of the $k^{th}$ frame, $t_k$, is defined as the first slot when the primary user becomes idle 
after the ``PU Busy'' period of the $(k-1)^{th}$ frame. 
Let $Q_{su}(t_k)$ denote the secondary user queue backlog at time $t_k$.
Also let $P(t)$ be the power expenditure incurred by the
secondary user in slot $t$. 
For notational convenience, in the following we will denote $\mu_{su}(P(t))$ by $\mu_{su}(t)$ noting
the dependence on $P(t)$ is implicit.
Then the queueing dynamics of $Q_{su}(t_k)$ satisfies the following:
\begin{align}
Q_{su}(t_{k+1}) &\leq \max[Q_{su}(t_k) - \sum_{t=t_k}^{t_{k+1}-1} \mu_{su}(t), 0] \nonumber\\
&\qquad + \sum_{t=t_k}^{t_{k+1}-1} R_{su}(t)
\label{eq:u_su}
\end{align}
where $R_{su}(t)$ denotes the number of new packets admitted in slot $t$
and $t_{k+1}$ denotes the start of the $(k+1)^{th}$ frame. 
The above expression has an inequality because it may be possible to
serve the packets admitted in the $k^{th}$ frame during that frame itself.

In order to meet the time average power constraint, we make use of a virtual power queue $X_{su}(t_k)$ \cite{neely-energy} which evolves 
over frames as follows:
\begin{align}
X_{su}(t_{k+1}) = \max[X_{su}(t_k) - T[k] P_{avg} + \sum_{t=t_k}^{t_{k+1}-1} P(t), 0]
\label{eq:x_su}
\end{align}
where $T[k] = t_{k+1} - t_k$ is the length of the $k^{th}$ frame. 
Recall that $T[k]$ is a (random) function of the control decisions taken during the $k^{th}$ frame.

In order to construct an optimal dynamic control policy, we use the technique of \cite{chih-ping, asilomar10, neely_new} 
where a ratio of ``drift-plus-penalty'' is maximized over every 
frame. Specifically, let $\boldsymbol{Q}(t_k) = (Q_{su}(t_k), X_{su}(t_k))$ denote the queueing state of the system at the start of the $k^{th}$ frame.
As a measure of the congestion in the system, we use a Lyapunov function 
$L(\boldsymbol{Q}(t_k)) \defequiv \frac{1}{2}[Q_{su}^2(t_k) + X_{su}^2(t_k)]$. 
Define the drift  $\Delta(t_k)$ as the conditional expected change in 
$L(\boldsymbol{Q}(t_k))$ over the frame $k$: 
\begin{align}
\Delta(t_k) \defequiv \expect{L(\boldsymbol{Q}(t_{k+1})) - L(\boldsymbol{Q}(t_k)) | \boldsymbol{Q}(t_k)}
\label{eq:femto_drift_def}
\end{align}
Then, using (\ref{eq:u_su}) and (\ref{eq:x_su}), we can bound $\Delta(t_k)$ as follows:
\begin{align}
&\Delta(t_k) \leq B - Q_{su}(t_k) \expect{\sum_{t=t_k}^{t_{k+1}-1} [\mu_{su}(t) - R_{su}(t)] | \boldsymbol{Q}(t_k)} \nonumber\\
&\qquad - X_{su}(t_k) \expect{T[k] P_{avg} - \sum_{t=t_k}^{t_{k+1}-1} P(t) | \boldsymbol{Q}(t_k)}
\label{eq:femto_drift1}
\end{align}
where $B$ is a finite constant that satisfies the following for all $k$ and $\boldsymbol{Q}(t_k)$ under any control algorithm:
\begin{align*}
B \geq \frac{1}{2} \mathbb{E}\Bigg\{ &\Big(\sum_{t=t_k}^{t_{k+1}-1} \mu_{su}(t) \Big)^2 + \Big(\sum_{t=t_k}^{t_{k+1}-1} R_{su}(t) \Big)^2 \\
&+ \Big(\sum_{t=t_k}^{t_{k+1}-1} P(t) - T[k] P_{avg} \Big)^2| \boldsymbol{Q}(t_k) \Bigg\} 	
\end{align*}
Using the fact that $\mu_{su}(t) \leq \mu_{max}, P(t) \leq P_{max}$ for all $t$, and using the fact (\ref{eq:second_moment}), 
it follows that choosing $B$ as follows satisfies the above:
\begin{align}
B = \frac{D[\mu_{max}^2 + A_{max}^2 + (P_{max} - P_{avg})^2]}{2} 
\label{eq:femto_B}
\end{align}

Adding a penalty term $-V \expect{\sum_{t=t_k}^{t_{k+1}-1} R_{su}(t) | \boldsymbol{Q}(t_k)}$ (where $V > 0$ is a control parameter that affects a utility-delay
trade-off as shown in Theorem \ref{thm:femto_performance}) to both sides and rearranging yields:
\begin{align}
&\Delta(t_k) - V \expect{\sum_{t=t_k}^{t_{k+1}-1} R_{su}(t) | \boldsymbol{Q}(t_k)} \leq B + (Q_{su}(t_k) - V)\nonumber\\
& \times\expect{\sum_{t=t_k}^{t_{k+1}-1} R_{su}(t) | \boldsymbol{Q}(t_k)} - X_{su}(t_k) \expect{T[k] P_{avg} | \boldsymbol{Q}(t_k)} \nonumber\\
& - \expect{\sum_{t=t_k}^{t_{k+1}-1} \Big(Q_{su}(t_k) \mu_{su}(t) - X_{su}(t_k) P(t) \Big) | \boldsymbol{Q}(t_k)} 
\label{eq:femto_drift2}
\end{align}

Minimizing the ratio of an upper bound on the right hand side of the above expression and the expected frame length
over all control options leads to the following \emph{Frame-Based-Drift-Plus-Penalty-Algorithm}.
In each frame $k \in \{1, 2, 3, \ldots \}$, do the following:
\begin{enumerate}

\item \emph{Admission Control}: For all $t \in \{t_k, t_k + 1, \ldots, t_{k+1} -1 \}$, choose $R_{su}(t)$ as follows:
\begin{align}
R_{su}(t) = \left\{ \begin{array}{ll} A_{su}(t) & \textrm{if $Q_{su}(t) \leq V$} \\
0 & \textrm{else}
\end{array} \right.
\label{eq:femto_adm_ctrl}
\end{align}


\item \emph{Resource Allocation}: Choose a policy that maximizes the following ratio:
\begin{align}
\frac{\expect{\sum_{t=t_k}^{t_{k+1}-1} \Big(Q_{su}(t_k) \mu_{su}(t) - X_{su}(t_k) P(t) \Big) | \boldsymbol{Q}(t_k)}}{\expect{T[k] | \boldsymbol{Q}(t_k)}}
\label{eq:femto_resource_alloc}
\end{align}

Specifically, every slot $t$ of the frame, the policy observes the
queue values $Q_{su}(t_k)$ and $X_{su}(t_k)$ at the beginning of the frame 
and selects a secondary user power $P(t)$ subject to the constraint $P(t) \in \mathcal{P}$
and the constraint on transmitting own data vs. cooperation depending on whether
slot $t$ is in the ``PU Idle'' or ``PU Busy'' period of the frame.
This is done in such a way that the above frame-based ratio of expectations is maximized. ¬¨‚Ä†
Recall that the frame size $T[k]$ is influenced by the policy through the success probabilities
that are determined by secondary user power selections. Further recall that these success
probabilities are different during the ``PU Idle'' and ``PU Busy'' periods of the frame.
An explicit policy that maximizes this expectation is given in the next section.

\item \emph{Queue Update}: After implementing this policy, update the queues as in (\ref{eq:ch4_queues2}) and (\ref{eq:x_su}).
\end{enumerate}

From the above, it can be seen that the admission control part (\ref{eq:femto_adm_ctrl}) is a simple threshold-based decision
that does not require any knowledge of the arrival rates $\lambda_{su}$ or $\lambda_{pu}$. 
In the next section, we present an explicit solution to the maximizing
policy for the resource allocation in (\ref{eq:femto_resource_alloc}) and show that, remarkably, it
also does not require knowledge of $\lambda_{su}$ or $\lambda_{pu}$ and
can be computed easily.
We will then analyze the performance of the \emph{Frame-Based-Drift-Plus-Penalty-Algorithm} 
in Sec. \ref{section:femto_analysis}.


\section{The Maximizing Policy of (\ref{eq:femto_resource_alloc})}
\label{section:solving}

The policy that maximizes (\ref{eq:femto_resource_alloc})
uses only two numbers that we call $P_0^*$ and $P_1^*$,
defined as follows. $P_0^*$ is given by the solution to the
following optimization problem:
\begin{align}
\textrm{Maximize:} \;\; & Q_{su}(t_k) \mu_{su}(P_0) - X_{su}(t_k) P_0 \nonumber \\
\textrm{Subject to:} \;\; & P_0 \in \mathcal{P}
\label{eq:P_0_define}
\end{align}
Let $\theta^* \defequiv Q_{su}(t_k) \mu_{su}(P_0^*) - X_{su}(t_k) P_0^*$ denote the value of the
objective of (\ref{eq:P_0_define}) under the optimal solution. Then, 
$P_1^*$ is given by the solution to the following optimization problem:
\begin{align}
\textrm{Minimize:} \;\; &\frac{ \theta^* + X_{su}(t_k) P_1 }{\phi(P_1)} \nonumber \\
\textrm{Subject to:} \;\;& P_1 \in \mathcal{P}
\label{eq:P_1_define}
\end{align}
Note that both (\ref{eq:P_0_define}) and (\ref{eq:P_1_define}) are simple optimization problems in a single variable and
can be solved efficiently.
Given $P_0^*$ and $P_1^*$, on every slot $t$ of frame $k$, the policy that maximizes (\ref{eq:femto_resource_alloc})  
chooses power $P(t)$ as follows:
\begin{align}
P(t)  = \left\{ \begin{array}{ll} P_0^* & \textrm{if $Q_{pu}(t) = 0$} \\
P_1^* & \textrm{if $Q_{pu}(t) > 0$}
\end{array} \right.
\label{eq:P_su}
\end{align}

That is, the secondary user uses the constant power $P_0^*$ for its own transmission 
during the ``PU Idle''period of the frame, and uses constant power $P_1^*$ for cooperative transmission
during all slots of the ``PU busy''period of the frame. 
Note that $P_0^*$ and $P_1^*$ can be computed easily based on the weights
$Q_{su}(t_k), X_{su}(t_k)$ associated with frame $k$, 
and do not require knowledge of the arrival rates $\lambda_{su}, \lambda_{pu}$.

Our proof that the above decisions maximize (\ref{eq:femto_resource_alloc})
 has the following parts: First, we show that the decisions that maximize the ratio of expectations in 
(\ref{eq:femto_resource_alloc}) are the same as the optimal decisions in an equivalent infinite
horizon ¬¨‚Ä†Markov decision problem (MDP). ¬¨‚Ä†Next, we show that the solution to the
infinite horizon MDP uses fixed power $P_i$ for each queue state $Q_{pu}(t) = i$ (for
$i \in \{0, 1, 2, \ldots\}$). ¬¨‚Ä†Then, we show that $P_i$ are the same for all $i \geq 1$. ¬¨‚Ä†Finally,
we show that the optimal powers $P_0^*$ and $P_1^*$ are given as above. The detailed proof is
given in the next section.

\subsection{Proof Details}
\label{section:solving_detail}

Recall that the \emph{Frame-Based-Drift-Plus-Penalty-Algorithm} 
chooses a policy that maximizes the following ratio over every  frame $k \in \{1, 2, 3, \ldots \}$
\begin{align}
\frac{\expect{\sum_{t=t_k}^{t_{k+1}-1} \Big(Q_{su}(t_k) {\mu}_{su}(t) - X_{su}(t_k) {P}(t) \Big)| \boldsymbol{Q}(t_k)}}{\expect{T[k] | \boldsymbol{Q}(t_k)}}
\label{eq:resource_alloc1}
\end{align}
subject to the constraints described in Sec. \ref{section:femto_basic}.
Here we examine how to solve (\ref{eq:resource_alloc1}) in detail. 
First, define the state $i$ in any slot $t \in \{t_k, t_k + 1, \ldots, t_{k+1} -1 \}$
as the value of the primary user queue backlog $Q_{pu}(t)$ in that slot.
Now let $\mathcal{R}$ denote the class of stationary, randomized policies where every policy $r \in \mathcal{R}$ chooses a power allocation 
$P_i(r) \in \mathcal{P}$ in each state $i$ according to a stationary distribution. 
It can be shown that it is sufficient to only consider policies in $\mathcal{R}$ to maximize  (\ref{eq:resource_alloc1}).
Now suppose a policy $r \in \mathcal{R}$ 
is implemented on a \emph{recurrent} system with fixed $Q_{su}(t_k)$ and $X_{su}(t_k)$ and with the same state dynamics as our model. 
Note that $\mu_{su}(t) = 0$ for all $t$ when the state $i \geq 1$. 
Then, by basic renewal theory \cite{gallager}, 
we have that maximizing the ratio in (\ref{eq:resource_alloc1}) is equivalent to the following optimization problem:
\begin{align}
\textrm{Maximize:} \; &Q_{su}(t_k) \expect{\mu_{su}(P_{0}(r))} \pi_0(r) \nonumber \\
&- X_{su}(t_k) \sum_{i \geq 0} \expect{P_{i}(r)} \pi_i(r) \nonumber\\
\textrm{Subject to:}\; &r \in \mathcal{R}
\label{eq:femto_time_avg}
\end{align}
where $\pi_i(r)$ is the resulting steady-state probability of being in state $i$ in the recurrent system 
under the stationary, randomized policy $r$ and where the expectations
above are with respect to $r$. Note that well-defined steady-state probabilities $\pi_i(r)$ exist for all $r \in \mathcal{R}$ because
we have assumed that $\lambda_{pu} < \phi_{nc}$ so that even if no cooperation is used, the primary queue is stable and the system is recurrent.
Thus, solving (\ref{eq:resource_alloc1}) is equivalent to solving the 
\emph{unconstrained time average maximization problem} 
(\ref{eq:femto_time_avg}) over the class of stationary, randomized policies. Note that (\ref{eq:femto_time_avg}) is an 
infinite horizon Markov decision problem (MDP) over the state space $i \in \{0, 1, 2, \ldots \}$.
We study this problem in the following.

Consider the optimal stationary, randomized policy that maximizes the objective in (\ref{eq:femto_time_avg}).
Let $\chi_i$ denote the probability distribution over $\mathcal{P}$ that is used by this policy to choose a power allocation $P_i$ in state $i$.
Let $\mu_i$ denote the resulting effective probability of successful primary transmission in state $i \geq 1$. Then we have that 
$\mu_i = \mathbb{E}_{\chi_i} \{\phi(P_i)\}$ where $\phi(P_i)$ denotes the probability of successful transmission in state $i$ when the secondary user spends power
$P_i$ in cooperative transmission with the primary user. 
Since the system is stable and has a well-defined steady-state distribution, we can write down the detail equations for the Markov
Chain that describes the state transitions of the system as follows (See Fig. \ref{fig:bd}):
\begin{align*}
\pi_0 \lambda_{pu} &= \pi_1 (1-\lambda_{pu}) \mu_1  \\
\pi_1 \lambda_{pu}(1- \mu_1) &= \pi_{2}(1-\lambda_{pu}) \mu_2 \\
&\vdots \\
\pi_i \lambda_{pu}(1- \mu_i) &= \pi_{i+1}(1-\lambda_{pu}) \mu_{i+1} \qquad \forall i \geq 1
\end{align*}
where $\pi_i$ denotes the steady-state probability of being in state $i$ under this policy.
Summing over all $i$ yields:
\begin{align}
\lambda_{pu} = \sum_{i\geq1} \pi_i \mu_i 
\label{eq:eq1_new}
\end{align}
The average power incurred in cooperative transmissions under this policy is given by:
\begin{align}
\overline{P} = \sum_{i \geq 1} \pi_i \mathbb{E}_{\chi_i} \{P_i\}
\label{eq:eq2_new}
\end{align}

\begin{figure}
\centering
\includegraphics[width=8.5cm]{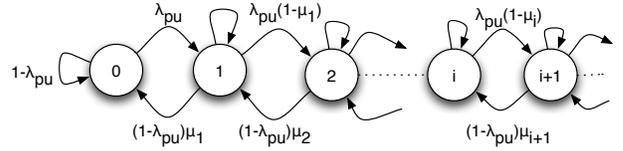}
\caption{Birth-Death Markov Chain over the system state where the system state represents the primary user queue backlog.}
\label{fig:bd}
\end{figure}


Now consider an alternate stationary policy that uses the following fixed distribution $\chi '$ for choosing control action $P'$ in all states $i \geq 1$:
\begin{align}
\chi'  \defequiv \left\{ \begin{array}{llll} \chi_1 & \textrm{with probability $\frac{\pi_1}{\sum_{j \geq 1} \pi_j}$} \\
\chi_2 & \textrm{with probability $\frac{\pi_2}{\sum_{j \geq 1} \pi_j}$} \\
\vdots  & \\
\chi_i & \textrm{with probability $\frac{\pi_i}{\sum_{j \geq 1} \pi_j}$} \\
\vdots  & 
\end{array} \right.
\label{eq:chi'}
\end{align}

Let $\mu'$ denote the resulting effective probability of a successful primary transmission in any state $i \geq 1$. Note that this is 
same for all states by the definition (\ref{eq:chi'}). Then, we have that:
\begin{align}
\mu' = \sum_{i \geq 1} \mu_i \frac{\pi_i}{\sum_{j \geq 1} \pi_j}
\end{align}
Let $\pi_i'$ denote the steady-state probability of being in state $i$ under this alternate policy.
Note that the system is stable under this alternate policy as well. Thus, using the detail equations 
for the Markov Chain that describes the state transitions of the system under this policy yields
\begin{align}
&\lambda_{pu} = \sum_{k \geq 1} \pi_k' \mu' = \sum_{k \geq 1} \pi_k' \Big(\sum_{i \geq 1} \mu_i \frac{\pi_i}{\sum_{j \geq 1} \pi_j} \Big) \nonumber\\
&= \sum_{k \geq 1} \pi_k' \Big( \frac{\sum_{i \geq 1} \mu_i \pi_i}{\sum_{j \geq 1} \pi_j} \Big) 
= \sum_{k \geq 1} \pi_k' \Big( \frac{\lambda_{pu}}{\sum_{j \geq 1} \pi_j} \Big)
\end{align}
where we used (\ref{eq:eq1_new}) in the last step.
This implies that $\sum_{k \geq 1} \pi_k' = \sum_{j \geq 1} \pi_j$ and therefore $\pi_0' = \pi_0$.
Also, the average power incurred in cooperative transmissions under this alternate policy is given by:
\begin{align}
&\overline{P}' = \sum_{k \geq 1} \pi_k' \mathbb{E}_{\chi'} \{P'\} 
= \sum_{k \geq 1} \pi_k'  \Big( \sum_{i \geq 1} \mathbb{E}_{\chi_i} \{P_i\} \frac{\pi_i}{\sum_{j \geq 1} \pi_j} \Big) \nonumber\\
&=  \sum_{k \geq 1} \pi_k' \Big( \frac{\overline{P}}{\sum_{j \geq 1} \pi_j} \Big) = \overline{P}
\end{align}
where we used (\ref{eq:eq2_new}) in the second last step and $\sum_{k \geq 1} \pi_k' = \sum_{j \geq 1} \pi_j$ in the last step.

Thus, if we choose $\chi' = \chi_0$ in state $i=0$ and choose $\chi'$ as defined in (\ref{eq:chi'}) in all other states, 
it can be seen that \emph{the alternate policy achieves the same time average
value of the objective (\ref{eq:femto_time_avg}) as the optimal policy}.
This implies that to maximize (\ref{eq:femto_time_avg}), it is sufficient to optimize over the class of stationary policies that use the 
\emph{same} distribution for choosing $P_i$ for all states $i \geq 1$. Denote this class by $\mathcal{R}'$. Then for all $i > 1$, 
we have that $\expect{P_i(r)} = \expect{P_1(r)}$ for all $r \in \mathcal{R}'$. 
Using this and the fact that $1 - \pi_0(r) = \sum_{i \geq 1} \pi_i(r)$, (\ref{eq:femto_time_avg}) can be simplified as follows:
\begin{align}
\textrm{Maximize:} \; & [Q_{su}(t_k) \mathbb{E}\{\mu_{su}(P_0(r))\} - X_{su}(t_k) \expect{P_0(r)}] \pi_0(r) \nonumber\\
& - X_{su}(t_k) \expect{P_1(r)} (1 - \pi_0(r)) \nonumber \\
\textrm{Subject to:} \; & r \in \mathcal{R}'
\label{eq:time_avg2}
\end{align}
where $\pi_0(r)$ is the resulting steady-state probability of being in state $0$ and where $\expect{P_1(r)}$ is the 
average power incurred in cooperative transmission in state $i = 1$ (same for all states $i \geq 1$).
Next, note that the control decisions taken by the secondary user in state $i = 0$ do not affect the length of the frame and therefore $\pi_0(r)$. 
Further, the expectations can be removed.
Therefore the first term in the problem above can be maximized separately as follows:
\begin{align}
\textrm{Maximize:} \;\; & Q_{su}(t_k) \mu_{su}(P_0) - X_{su}(t_k) P_0 \nonumber \\
\textrm{Subject to:} \;\; & P_0 \in \mathcal{P}
\label{eq:time_avg3}
\end{align}
This is the same as (\ref{eq:P_0_define}). 
Let $P_0^*$ denote the optimal solution to (\ref{eq:time_avg3}) and let $\theta^* = Q_{su}(t_k) \mu_{su}(P_0^*) - X_{su}(t_k) P_0^*$ denote the value of the
objective of (\ref{eq:time_avg3}) under the optimal solution. Note that we must have that $\theta^* \geq 0$ because the value of the objective
when the secondary user chooses $P_0 = 0$ (i.e., stays idle) is $0$.
Then, (\ref{eq:time_avg2}) can be written as:
\begin{align}
\textrm{Maximize:} \;\; & \theta^* \pi_0(r) - X_{su}(t_k) \expect{P_1(r)} (1 - \pi_0(r)) \nonumber \\
\textrm{Subject to:} \;\; & r \in \mathcal{R}'
\label{eq:time_avg4}
\end{align}
The effective probability of a successful primary transmission in any state $i \geq 1$ is given by $\mathbb{E} \{\phi(P_1(r))\}$. 
Using Little's Theorem, we have $\pi_0(r) = 1 - \frac{\lambda_{pu}}{\mathbb{E} \{\phi(P_1(r))\}}$. Using this and rearranging
the objective in (\ref{eq:time_avg4}) and ignoring the constant terms, we have the following equivalent problem:
\begin{align}
\textrm{Minimize:} \;\; &\frac{ \theta^* + X_{su}(t_k) \mathbb{E} \{P_1(r)\} }{\mathbb{E} \{\phi(P_1(r))\}} \nonumber \\
\textrm{Subject to:} \;\;& r \in \mathcal{R}'
\label{eq:time_avg5}
\end{align}
It can be shown that it is sufficient to consider only \emph{deterministic} power allocations to solve (\ref{eq:time_avg5}) (see, for example,
\cite[Section 7.3.2]{neely_new}). This yields the following problem:
\begin{align}
\textrm{Minimize:} \;\; &\frac{ \theta^* + X_{su}(t_k) P_1 }{\phi(P_1)} \nonumber \\
\textrm{Subject to:} \;\;& P_1 \in \mathcal{P}
\label{eq:time_avg6}
\end{align}
This is the same as (\ref{eq:P_1_define}). 
Note that solving this problem does not require knowledge of $\lambda_{pu}$ or $\lambda_{su}$ 
and can be solved easily for general power allocation options $\mathcal{P}$. 
We present an example that admits a particularly simple solution to this problem.

Suppose $\mathcal{P} = \{0, P_{max}\}$ so that the secondary user can either cooperate with full power $P_{max}$ or
not cooperate (with power expenditure  $0$) with the primary user. 
Then, the optimal solution to (\ref{eq:time_avg6}) can be calculated by comparing the value of 
its objective for $P_1 \in \{0, P_{max}\}$. This yields the following simple threshold-based rule:
\begin{align}
P_1^*  = \left\{ \begin{array}{ll} 0 & \textrm{if $X_{su}(t_k) \geq \frac{\theta^* (\phi_c - \phi_{nc})}{ P_{max} \phi_{nc}}$  } \\
P_{max} & \textrm{else} 
\end{array} \right.
\label{eq:P1*_1}
\end{align}
We also note that this threshold can be computed without any knowledge of the input rates $\lambda_{pu}, \lambda_{su}$.

To summarize, the overall solution to (\ref{eq:femto_resource_alloc}) is given by the pair $(P_0^*, P_1^*)$ where $P_0^*$ denotes the 
power allocation used by the secondary user for its own transmission when the primary user is idle 
and $P_1^*$ denotes the power used by the secondary user for cooperative
transmission. Note that these values remain fixed for the entire duration of frame $k$. 
However, these can change from one frame to another depending on the values of the
queues $Q_{su}(t_k), X_{su}(t_k)$. The computation of $(P_0^*, P_1^*)$
can be carried out using a two-step process as follows:

\begin{enumerate}
\item First, compute $P_0^*$ by solving problem (\ref{eq:time_avg3}). Let $\theta^*$ be the value of the
objective of (\ref{eq:time_avg3}) under the optimal solution $P_0^*$. 
\item Then compute $P_1^*$ by solving problem (\ref{eq:time_avg6}).
\end{enumerate}

It is interesting to note that in order to implement this algorithm, the secondary user does not require knowledge of the current
queue backlog value of the primary user. Rather, it only needs to know the values of its own queues and
whether the current slot is in the ``PU Idle'' or
``PU Busy'' part of the frame. This is quite different from the conventional solution to the MDP (\ref{eq:cmdp}) which is
typically a different randomized policy for each value of the state (i.e., the primary queue backlog).

\section{Performance Analysis}
\label{section:femto_analysis}


To analyze the performance of 
 the \emph{Frame-Based-Drift-Plus-Penalty-Algorithm}, 
we compare its Lyapunov drift with that of the optimal stationary, randomized policy \emph{STAT} of Lemma \ref{lem:femto_one}. 
First, note that by basic renewal theory \cite{gallager}, the performance guarantees provided by \emph{STAT}
hold over every frame $k \in \{1, 2, 3, \ldots \}$. Specifically, let $t_k$ be the start of the $k^{th}$ frame. Suppose 
\emph{STAT} is implemented over this frame. Then the following hold:
\begin{align}
& \expect{\sum_{t=t_k}^{\hat{t}_{k+1}-1} R^{stat}_{su}(t)} = \expect{\hat{T}[k]} \upsilon^* 
\label{eq:iid_1} \\
& \expect{\sum_{t=t_k}^{\hat{t}_{k+1}-1} R^{stat}_{su}(t)} \leq \expect{\sum_{t=t_k}^{\hat{t}_{k+1}-1} \mu^{stat}_{su}(t)} 
\label{eq:iid_2} \\
& \expect{ \sum_{t=t_k}^{\hat{t}_{k+1}-1} P^{stat}_{su}(t)} \leq \expect{\hat{T}[k]} P_{avg} 
\label{eq:iid_3}
\end{align}
where $\hat{t}_{k+1}$ and $\hat{T}[k]$ denote the start of the $(k+1)^{th}$ frame and the length of
the $k^{th}$ frame, respectively, under the policy \emph{STAT}. 
Similarly, $R^{stat}_{su}(t), {P}_{su}^{stat}(t), {\mu}_{su}^{stat}(t)$ denote the resource allocation decisions 
under \emph{STAT}.

Next, we define an alternate control algorithm \emph{ALT} that will be useful in analyzing the
performance of 
the \emph{Frame-Based-Drift-Plus-Penalty-Algorithm}.

\emph{Algorithm ALT:} In each frame $k \in \{1, 2, 3, \ldots \}$, do the following:
\begin{enumerate}

\item \emph{Admission Control}: For all $t \in \{t_k, t_k + 1, \ldots, t_{k+1} -1 \}$, choose $R_{su}(t)$ as follows:
\begin{align}
R_{su}(t) = \left\{ \begin{array}{ll} A_{su}(t) & \textrm{if $Q_{su}(t_k) \leq V$} \\
0 & \textrm{else}
\end{array} \right.
\label{eq:adm_ctrl_alt}
\end{align}

\item \emph{Resource Allocation}: Choose a policy that maximizes the following ratio:
\begin{align}
\frac{\expect{\sum_{t=t_k}^{t_{k+1}-1} \Big(Q_{su}(t_k) \mu_{su}(t) - X_{su}(t_k) P(t) \Big) | \boldsymbol{Q}(t_k)}}{\expect{T[k] | \boldsymbol{Q}(t_k)}}
\label{eq:resource_alloc_alt}
\end{align}
\item \emph{Queue Update}: After implementing this policy, update the queues as in (\ref{eq:u_su}), (\ref{eq:x_su}).
\end{enumerate}

By comparing with 
the \emph{Frame-Based-Drift-Plus-Penalty-Algorithm}, 
it can be see that 
this algorithm differs only in the admission control part while the resource allocation decisions are exactly the same. 
Specifically, under \emph{ALT}, the queue backlog $Q_{su}(t_k)$ at the start of the $k^{th}$ frame is used for making admission control decisions for
the entire duration of that frame. However, under the \emph{Frame-Based-Drift-Plus-Penalty-Algorithm}, 
the queue backlog $Q_{su}(t)$ at the start of \emph{each} slot is used for
making admission control decisions. 
Note that since the length of the frame depends only on the resource allocation decisions and they are the same under the two algorithms,
it follows that implementing them with the same starting backlog $\boldsymbol{Q}(t_k)$ yields the same frame lengths.

The following lemma compares the value of the second term in the Lyapunov drift bound (\ref{eq:femto_drift2}) that
corresponds to the admission control decisions under these two algorithms. 
\begin{lem} 
Let $R^{fab}_{su}(t)$ and $R^{alt}_{su}(t)$ denote the admission control decisions made by 
 the \emph{Frame-Based-Drift-Plus-Penalty-Algorithm} and the \emph{ALT} algorithm respectively for
all $t \in \{t_k, t_k + 1, \ldots, t_{k+1} -1 \}$. Then we have:
\begin{align}
&\expect{\sum_{t=t_k}^{t_{k+1}-1} (Q_{su}(t_k) - V) R^{alt}_{su}(t) | \boldsymbol{Q}(t_k)} \nonumber \\
&\geq \expect{\sum_{t=t_k}^{t_{k+1}-1} (Q_{su}(t_k) - V) R^{fab}_{su}(t) | \boldsymbol{Q}(t_k)} - C
\label{eq:adm_ctrl_bound}
\end{align}
where $C \defequiv \frac{D(A_{max} + \mu_{max})A_{max}}{2}$ is a constant that does not depend on $V$.
\label{lem:adm_ctrl_bound_lem}
\end{lem}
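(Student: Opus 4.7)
The plan is to exploit the fact that, for admission control within a single slot, FAB is actually a slot-wise minimizer of the ``drift-plus-penalty'' contribution $(Q^{fab}_{su}(t) - V)R(t)$ over all feasible $R(t) \in \{0, A_{su}(t)\}$, while ALT uses the stale queue value $Q_{su}(t_k)$. The gap between the two is therefore controlled by how much the secondary queue can wander within a single frame. I would first verify this slot-wise optimality directly from the definition (\ref{eq:femto_adm_ctrl}): when $Q^{fab}_{su}(t) \leq V$ the coefficient is nonpositive and FAB picks the maximum admission $A_{su}(t)$, while when $Q^{fab}_{su}(t) > V$ the coefficient is positive and FAB picks $0$. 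Since ALT's decision $R^{alt}_{su}(t)$ lies in the same feasible set, this gives the pointwise inequality
\begin{equation*}
(Q^{fab}_{su}(t) - V)\bigl[R^{fab}_{su}(t) - R^{alt}_{su}(t)\bigr] \leq 0.
\end{equation*}

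Next, I would replace the running backlog $Q^{fab}_{su}(t)$ by the frame-start backlog $Q_{su}(t_k)$ by writing
\begin{equation*}
(Q_{su}(t_k) - V)\bigl[R^{fab}_{su}(t) - R^{alt}_{su}(t)\bigr] = (Q^{fab}_{su}(t) - V)\bigl[R^{fab}_{su}(t) - R^{alt}_{su}(t)\bigr] + \bigl[Q_{su}(t_k) - Q^{fab}_{su}(t)\bigr]\bigl[R^{fab}_{su}(t) - R^{alt}_{su}(t)\bigr].
\end{equation*}
The first summand on the right is nonpositive by the previous step, so it can be dropped when upper-bounding. It remains to bound the second summand.

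For that, I would use the standard one-step queue-perturbation argument: from (\ref{eq:ch4_queues2}) we have $|Q^{fab}_{su}(s{+}1) - Q^{fab}_{su}(s)| \leq A_{max} + \mu_{max}$, so $|Q_{su}(t_k) - Q^{fab}_{su}(t)| \leq (t - t_k)(A_{max} + \mu_{max})$, while $|R^{fab}_{su}(t) - R^{alt}_{su}(t)| \leq A_{max}$. Summing over $t = t_k, \ldots, t_{k+1}-1$ gives
\begin{equation*}
\sum_{t=t_k}^{t_{k+1}-1} |Q_{su}(t_k) - Q^{fab}_{su}(t)| \cdot |R^{fab}_{su}(t) - R^{alt}_{su}(t)| \leq (A_{max} + \mu_{max})A_{max} \cdot \frac{T[k](T[k]-1)}{2} \leq \frac{(A_{max} + \mu_{max})A_{max}\, T^2[k]}{2}.
\end{equation*}
Taking conditional expectation given $\boldsymbol{Q}(t_k)$ and applying the second-moment bound (\ref{eq:second_moment}) produces exactly the constant $C = D(A_{max} + \mu_{max})A_{max}/2$, and rearranging yields the claimed inequality.

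I do not anticipate a real obstacle: the only thing to be careful about is the direction of inequalities (ALT minimizes the frame-level drift but FAB minimizes slot-level drift at the shifted state $Q^{fab}_{su}(t)$, so the bound naturally runs from FAB to ALT and picks up the queue-perturbation penalty $C$) and matching the exact form of $C$, which requires the one-step bound $A_{max} + \mu_{max}$ rather than $\max(A_{max},\mu_{max})$.
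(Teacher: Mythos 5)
Your proposal is correct and follows essentially the same route as the paper's proof: the same slot-wise optimality of the FAB threshold rule evaluated at the running backlog $Q^{fab}_{su}(t)$, the same per-slot queue-perturbation bounds of $(t-t_k)A_{max}$ and $(t-t_k)\mu_{max}$, the identity $\sum_{t=t_k}^{t_{k+1}-1}(t-t_k)=T[k](T[k]-1)/2$, and the second-moment bound $\expect{T^2[k]}\leq D$, yielding the identical constant $C$. The only cosmetic difference is that you package the comparison as a single algebraic decomposition with an absolute-value bound on $[Q_{su}(t_k)-Q^{fab}_{su}(t)][R^{fab}_{su}(t)-R^{alt}_{su}(t)]$, whereas the paper applies the two one-sided sample-path bounds separately to the left- and right-hand sides of the optimality inequality.
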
 

\begin{proof} See Appendix A.
\end{proof}

We are now ready to characterize the performance of 
the \emph{Frame-Based-Drift-Plus-Penalty-Algorithm}.

\begin{thm} (Performance Theorem) Suppose 
 the \emph{Frame-Based-Drift-Plus-Penalty-Algorithm} 
is implemented over all frames $k \in \{1, 2, 3, \ldots \}$ with initial condition $Q_{su}(0) = 0, X_{su}(0) = 0$
and with a control parameter $V > 0$. 
Let $\mu_{su}^{fab}(t), P_{su}^{fab}(t)$ denote the resource allocation decisions
under this algorithm. 
Then, we have:
\begin{enumerate}
\item The secondary user queue backlog $Q_{su}(t)$ is upper bounded for all $t$:
\begin{align}
Q_{su}(t) \leq Q_{max} \defequiv A_{max} + V
\label{eq:u_max}
\end{align}

\item The virtual power queue $X_{su}(t_k)$ is mean rate stable, i.e.,
\begin{align}
\lim_{K\rightarrow\infty} \frac{\expect{X_{su}(t_K)}}{K} = 0
\label{eq:x_max1}
\end{align}
Further, we have:
\begin{align}
&\limsup_{K\rightarrow\infty} \Bigg( \frac{1}{K} \sum_{k=1}^K \expect{\sum_{t=t_k}^{t_{k+1}-1} (P_{su}^{fab}(t) - P_{avg})} \Bigg) \leq 0 
\label{eq:x_max2} \\
&\limsup_{K\rightarrow\infty} \frac{\frac{1}{K} \sum_{k=1}^K \expect{\sum_{t=t_k}^{t_{k+1}-1} P_{su}^{fab}(t)}}{\frac{1}{K} \sum_{k=1}^K \expect{T[k]}} \leq P_{avg}
\label{eq:x_max3}
\end{align}

\item The time-average secondary user throughput (defined over frames) satisfies the following bound for all $K > 0$:
\begin{align}
\frac{\sum_{k=1}^K \expect{\sum_{t=t_k}^{t_{k+1}-1} R_{su}^{fab}(t)}} {\sum_{k=1}^K \expect{T[k]}} 
\geq  \upsilon^* - \frac{B + C}{V T_{min}}
\label{eq:femto_utility_bound}
\end{align}
where $B = \frac{D[\mu_{max}^2 + A_{max}^2 + (P_{max} - P_{avg})^2]}{2}$ and 
$C= \frac{D(A_{max} + \mu_{max})A_{max}}{2}$.
 
\end{enumerate}
\label{thm:femto_performance}
\end{thm}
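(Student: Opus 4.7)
The plan is to prove the bound on $Q_{su}(t)$ by a direct sample-path induction from the admission rule, and to prove the mean rate stability of $X_{su}(t_k)$ and the throughput guarantee (\ref{eq:femto_utility_bound}) jointly from a summed version of the frame-based drift-plus-penalty inequality (\ref{eq:femto_drift2}), benchmarked against the STAT policy of Lemma \ref{lem:femto_one}.

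For Part 1, I would induct on $t$, starting from $Q_{su}(0) = 0 \leq V + A_{max}$. In slot $t$, if $Q_{su}(t) \leq V$ the queue can grow by at most $A_{max}$, so $Q_{su}(t+1) \leq V + A_{max}$; if instead $V < Q_{su}(t) \leq V + A_{max}$, rule (\ref{eq:femto_adm_ctrl}) forces $R_{su}(t) = 0$, so $Q_{su}(t+1) \leq Q_{su}(t)$. This is purely deterministic.

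For Part 3, the central step is chaining two upper bounds onto the right-hand side of (\ref{eq:femto_drift2}) under the FBDPP policy. First, Lemma \ref{lem:adm_ctrl_bound_lem} lets me replace the per-slot FBDPP admission terms by the frame-frozen ALT admission terms at the cost of an additive constant $C$. Second, since FBDPP (and ALT) use the resource allocation rule that minimizes the ratio (\ref{eq:femto_resource_alloc}), and since STAT is a feasible stationary policy depending only on $Q_{pu}(t)$, renewal theory identifies STAT's frame-level conditional expectations with its time-average guarantees (\ref{eq:iid_1})--(\ref{eq:iid_3}). Plugging STAT into the right-hand side of (\ref{eq:femto_drift2}) and using these guarantees cancels the $Q_{su}(t_k)$- and $X_{su}(t_k)$-weighted terms, producing
\begin{align*}
\Delta(t_k) &- V\expect{\sum_{t=t_k}^{t_{k+1}-1}R^{fab}_{su}(t) \mid \boldsymbol{Q}(t_k)} \\
&\leq B + C - V\upsilon^*\expect{\hat{T}[k]\mid \boldsymbol{Q}(t_k)}.
\end{align*}
Taking total expectation, summing over $k=1,\ldots,K$, telescoping the Lyapunov drift, dropping $\expect{L(\boldsymbol{Q}(t_{K+1}))} \geq 0$, and dividing by $V\sum_k \expect{T[k]}$ while using $\expect{T[k]} \geq T_{min}$ yields (\ref{eq:femto_utility_bound}). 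For Part 2, the same summed inequality (with the nonpositive $-V\sum R^{fab}_{su}$ term bounded trivially) gives $\expect{X_{su}^2(t_{K+1})} = O(K)$, so $\expect{X_{su}(t_K)}/K \to 0$ by Jensen's inequality, establishing (\ref{eq:x_max1}). Iterating the virtual queue recursion (\ref{eq:x_su}) gives $X_{su}(t_{K+1}) \geq \sum_{k=1}^{K}\bigl(\sum_t P^{fab}_{su}(t) - T[k]P_{avg}\bigr)$; dividing by $K$ (respectively $\sum_k \expect{T[k]}$) and applying mean rate stability yields (\ref{eq:x_max2}) and (\ref{eq:x_max3}).

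The main obstacle will be justifying the clean cancellation claim in the STAT comparison, namely that STAT's per-frame conditional expectations in the bound (\ref{eq:femto_drift2}) coincide with the time-average guarantees in Lemma \ref{lem:femto_one}. Because STAT's randomization depends only on $Q_{pu}(t)$, and because each frame is a renewal cycle of the recurrent $Q_{pu}=0$ state, STAT's stationary distribution inside a frame matches its ergodic distribution, and basic renewal theory (as invoked for (\ref{eq:iid_1})--(\ref{eq:iid_3})) applies at the frame level independently of $\boldsymbol{Q}(t_k)$. Once this renewal identification is secured, the rest of the argument is bookkeeping on drift inequalities.
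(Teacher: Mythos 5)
Your overall route coincides with the paper's: Part 1 is the same deterministic induction; Parts 2 and 3 are obtained from the frame-based drift bound (\ref{eq:femto_drift2}) by chaining Lemma \ref{lem:adm_ctrl_bound_lem} (FBDPP admission $\to$ ALT admission at cost $C$) with a comparison to \emph{STAT} via the renewal identities (\ref{eq:iid_1})--(\ref{eq:iid_3}), then telescoping. Your direct Jensen argument for mean rate stability and the iteration of (\ref{eq:x_su}) for (\ref{eq:x_max2})--(\ref{eq:x_max3}) reproduce in elementary form what the paper delegates to Theorems 4.1 and 2.5(b) of \cite{neely_new}.

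The one step you need to tighten is the STAT comparison, where your displayed inequality ends with $-V\upsilon^*\expect{\hat{T}[k]\mid\boldsymbol{Q}(t_k)}$. As written this does not close the argument: you subsequently divide by $\sum_k\expect{T[k]}$ (the FBDPP frame lengths), and since \emph{STAT} may cooperate more or less than FBDPP, $\expect{\hat{T}[k]}$ need not dominate $\expect{T[k]}$, so $\upsilon^*\sum_k\expect{\hat{T}[k]}/\sum_k\expect{T[k]}$ need not be at least $\upsilon^*$. Moreover, one cannot simply ``plug STAT into the right-hand side'' of (\ref{eq:femto_drift2}), because the conditional expectations there are taken over a frame whose length is itself policy-dependent. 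The correct mechanism — which the paper carries out and which your ``ratio comparison'' remark gestures at — is to compare the \emph{ratios} of the admission and resource-allocation terms to the expected frame length. Each comparison then transfers STAT's per-frame quantities into the bound multiplied by the factor $\expect{T[k]\mid\boldsymbol{Q}(t_k)}/\expect{\hat{T}[k]\mid\boldsymbol{Q}(t_k)}$; combined with $\expect{\sum_t R^{stat}_{su}(t)}=\upsilon^*\expect{\hat{T}[k]}$ from (\ref{eq:iid_1}), the $\expect{\hat{T}[k]}$ factors cancel and the surviving term is $-V\upsilon^*\expect{T[k]\mid\boldsymbol{Q}(t_k)}$, i.e., scaled by the FBDPP frame length. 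With that correction the division by $\sum_k\expect{T[k]}$ and the bound $\expect{T[k]}\ge T_{min}$ give (\ref{eq:femto_utility_bound}) exactly as you describe, and the rest of your argument is sound. (Also, minor: the resource allocation step \emph{maximizes} the ratio (\ref{eq:femto_resource_alloc}), which is what \emph{minimizes} the corresponding term in the drift bound.)
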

Theorem \ref{thm:femto_performance} shows that the time-average secondary user throughput can be pushed to
within $O(1/V)$ of the optimal value with a trade-off in the worst case queue backlog.
By Little's Theorem, this leads to an $O(1/V, V)$ utility-delay tradeoff.

\begin{proof} 
{Part (1)}: We argue by induction. First, note that (\ref{eq:u_max}) holds for $t =0$. Next, suppose $Q_{su}(t) \leq Q_{max}$ for
some $t > 0$. We will show that $Q_{su}(t+1) \leq Q_{max}$. We have two cases. First, suppose $Q_{su}(t) \leq V$. Then,
by (\ref{eq:u_su}), the maximum that $Q_{su}(t)$ can increase is $A_{max}$ so that $Q_{su}(t+1) \leq A_{max} + V = Q_{max}$.
Next, suppose $Q_{su}(t) > V$. Then, the admission control decision (\ref{eq:femto_adm_ctrl}) chooses $R_{su}(t) = 0$.
Thus, by (\ref{eq:u_su}), we have that $Q_{su}(t+1) \leq Q_{su}(t) \leq Q_{max}$ for this case as well. Combining these two
cases proves the bound (\ref{eq:u_max}). 

{Parts (2) and (3)}: See Appendix B. 
\end{proof}


\section{Extensions to Basic Model}
\label{section:femto_extensions}

We consider two extensions to the basic model of Sec. \ref{section:femto_basic}.

\subsection{Multiple Secondary Users}
\label{section:multiple_su}

Consider the scenario with one primary user as before, but with $N > 1$ secondary users. The primary user
channel occupancy process evolves as before where the secondary users can transmit their own data only when
the primary user is idle. However, they may cooperatively transmit with the primary user to increase its transmission success
probability. In general, multiple secondary users may cooperatively transmit with the primary in one timeslot.
However, for simplicity, here we assume that at most one secondary user can take part in a cooperative transmission per slot.
Further, we also assume that at most one secondary user can transmit its data when the primary user is idle.
 
Our formulation can be easily extended to this scenario. Let $\mathcal{P}_i$ denote the set of power allocation options
for secondary user $i$. Suppose each secondary user $i$ is subject to
average and peak power constraints $P_{avg, i}$ and $P_{max, i}$ respectively. 
Also, let $\phi_i(P)$ denote the success probability of the primary transmission when
secondary user $i$ spends power $P$ in cooperative transmission.
Now consider the objective of maximizing the sum total throughput of the secondary users subject to each user's 
average and peak power constraints and the scheduling constraints of the model.
In order to apply the ``drift-plus-penalty'' ratio method, we use the following queues:
\begin{align}
&Q_{i}(t_{k+1}) \leq \max[Q_{i}(t_k) - \sum_{t=t_k}^{t_{k+1}-1} \mu_{i}(t), 0] + \sum_{t=t_k}^{t_{k+1}-1} R_{i}(t)
\label{eq:u_su_i} \\
&X_{i}(t_{k+1}) = \max[X_{i}(t_k) - T[k] P_{avg, i} + \sum_{t=t_k}^{t_{k+1}-1} P_{i}(t), 0]
\label{eq:x_su_i}
\end{align}
where $Q_{i}(t_k)$ is the queue backlog of secondary user $i$ at the beginning of the $k^{th}$ frame,
$\mu_{i}(t)$ is the service rate of secondary user $i$ in slot $t$,
$R_{i}(t)$ and $P_{i}(t)$ denote the number of new packets admitted and the power expenditure incurred by the
secondary user $i$ in slot $t$. Finally, $t_{k+1}$ denotes the start of the $(k+1)^{th}$ frame and 
 $T[k] = t_{k+1} - t_k$ is the length of the $k^{th}$ frame as before.


Let $\boldsymbol{Q}(t_k) = (Q_{1}(t_k), \ldots, Q_{N}(t_k), X_{1}(t_k), \ldots, X_{N}(t_k))$ 
denote the queueing state of the system at the start of the $k^{th}$ frame.
Using a Lyapunov function $L(\boldsymbol{Q}(t_k)) \defequiv \frac{1}{2}\Big[\sum_{i=1}^N Q_{i}^2(t_k) + \sum_{i=1}^N X_{i}^2(t_k)\Big]$ and
following the steps in Sec. \ref{section:solution} yields the following 
\emph{Multi-User Frame-Based-Drift-Plus-Penalty-Algorithm}.
In each frame $k \in \{1, 2, 3, \ldots \}$, do the following:
\begin{enumerate}

\item \emph{Admission Control}: For all $t \in \{t_k, t_k + 1, \ldots, t_{k+1} -1 \}$, for each secondary user $i \in \{1, 2, \ldots, N\}$,
choose $R_{i}(t)$ as follows:
\begin{align}
R_{i}(t) = \left\{ \begin{array}{ll} A_{i}(t) & \textrm{if $Q_{i}(t) \leq V$} \\
0 & \textrm{else}
\end{array} \right.
\label{eq:adm_ctrl_i}
\end{align}
where $A_i(t)$ is the number of new arrivals to secondary user $i$ in slot $t$.


\item \emph{Resource Allocation}: Choose a policy that maximizes the following ratio:
\begin{align}
\frac{\sum_{i=1}^N \expect{\sum_{t=t_k}^{t_{k+1}-1} (Q_i(t_k) \mu_{i}(t) - X_{i}(t_k) P_{i}(t)) | \boldsymbol{Q}(t_k)}}{\expect{T[k] | \boldsymbol{Q}(t_k)}}
\label{eq:resource_alloc_i}
\end{align}

\item \emph{Queue Update}: After implementing this policy, update the queues as in (\ref{eq:u_su_i}) and (\ref{eq:x_su_i}).
\end{enumerate}
Similar to the basic model, this algorithm can be implemented without any knowledge of the arrival rates $\lambda_{i}$ 
or $\lambda_{pu}$. Further, using the techniques developed in Sec. \ref{section:solving}, it can be shown that the solution to
(\ref{eq:resource_alloc_i}) can be computed in two steps as follows.
First, we solve the following problem for each $i \in \{1, 2, \ldots, N\}$:
\begin{align}
\textrm{Maximize:} \;\; & Q_{i}(t_k) \mu_{i}(P) - X_{i}(t_k) P \nonumber \\
\textrm{Subject to:} \;\; & P \in \mathcal{P}_i
\label{eq:time_avg3_i}
\end{align}
Let $P_0^*$ denote the optimal solution to (\ref{eq:time_avg3_i}) achieved by user $i^*$
and let $\theta^*$ denote the optimal objective value. 
This means user $i^*$ transmits on
all idle slots of frame $k$ with power $P_0^*$. Next, to determine the optimal cooperative transmission strategy, 
we solve the following problem for each $i \in \{1, 2, \ldots, N\}$:
\begin{align}
\textrm{Minimize:} \;\; &\frac{ \theta^* + X_{i}(t_k) P }{\phi_i(P)} \nonumber \\
\textrm{Subject to:} \;\;& P \in \mathcal{P}_i
\label{eq:time_avg6_i}
\end{align}
Let $P_1^*$ denote the optimal solution to (\ref{eq:time_avg6_i}) achieved by user $j^*$.
This means user $j^*$ cooperatively transmits on
all busy slots of frame $k$ with power $P_1^*$.

\subsection{Fading Channels}
\label{section:fading}

Next, suppose there is an additional channel fading process $S(t)$ that takes values from a finite set $\mathcal{S}$ in an i.i.d fashion every slot. 
We assume that in every slot, $\textrm{Prob}[S(t) = s] = q_s$ for all $s \in \mathcal{S}$.
The success probability with cooperative transmission now is a function of both the power allocation and the fading state in that slot.
Specifically, suppose the primary user is active in slot $t$ and the secondary user allocates power $P(t)$ for cooperative transmission. Also suppose
$S(t) = s$. Then the random success/failure outcome of the primary transmission is given by an indicator variable $\mu_{pu}(P(t), s)$ and the 
success probability is given by $\phi_s(P(t)) = \expect{\mu_{pu}(P(t), s)}$. The function $\phi_s(P)$ is known to the network controller
for all $s \in \mathcal{S}$ and is assumed to be non-decreasing in $P$ for each $s \in \mathcal{S}$. 
For simplicity, we assume that the secondary user transmission rate $\mu_{su}(t)$ depends only on $P(t)$.

By applying the  ``drift-plus-penalty'' ratio method to this extended model, we get the following control algorithm. The
admission control remains the same as (\ref{eq:femto_adm_ctrl}). The resource allocation part involves maximizing the ratio in
(\ref{eq:femto_resource_alloc}). Using the same arguments as before in Sec. \ref{section:solving}, it can be shown that
maximizing this ratio is equivalent to the following optimization problem:
\begin{align}
&\textrm{Max:} \; Q_{su}(t_k) \expect{\mu_{su}(P_{0}(r))} \pi_0(r) - X_{su}(t_k) \expect{P_{0}(r)} \pi_0(r) \nonumber\\
&\qquad - X_{su}(t_k) \sum_{i \geq 1} \sum_{s \in \mathcal{S}} \expect{P_{i, s}(r)} \pi_{i, s}(r) \nonumber\\
& \textrm{Subject to:}\; r \in \mathcal{R}
\label{eq:time_avg_fading}
\end{align}
where $\pi_{i, s}(r)$ is the resulting steady-state probability of being in state $(i, s)$ in the recurrent system 
under the stationary, randomized policy $r$ and where the expectations
above are with respect to $r$. We study this problem in the following.

 

Consider the optimal stationary, randomized policy that maximizes the objective in (\ref{eq:time_avg_fading}).
Let $\chi_{i,s}$ denote the probability distribution over $\mathcal{P}$ that is used by this policy to choose 
a control action $P_{i, s}$ in state $(i, s)$. Let $\mu_{i,s} = \mathbb{E}_{\chi_{i, s}} \{\phi_s(P_{i,s})\}$ 
denote the resulting effective probability of successful primary transmission in state $(i, s)$ where $i \geq 1$. 
Since the system is stable under any stationary policy, $\textrm{total incoming rate} = \textrm{total outgoing rate}$. 
Thus, we get:
\begin{align}
\lambda_{pu} = \sum_{i\geq1} \sum_{s \in \mathcal{S}} \pi_{i, s} \mu_{i,s} 
\label{eq:eq1_new_fading}
\end{align}
where $\pi_{i, s}$ denotes the steady-state probability of being in state $(i,s)$ under this policy.
Note that the system is stable and has a well-defined steady-state distribution.
The average power incurred in cooperative transmissions under this policy is given by:
\begin{align}
\overline{P} = \sum_{i \geq 1} \sum_{s \in \mathcal{S}} \pi_{i,s} \mathbb{E}_{\chi_{i,s}} \{P_{i,s}\}
\label{eq:eq2_new_fading}
\end{align}

Now consider an alternate stationary policy that, for each $s \in \mathcal{S}$, 
uses the following fixed distribution $\chi'_s$ for choosing control action $P'_s$ in all states $(i, s)$ where $i \geq 1$:
\begin{align}
\chi'_s  \defequiv \left\{ \begin{array}{llll} \chi_{1,s} & \textrm{with probability $\frac{\pi_{1,s}}{\sum_{j \geq 1} \pi_{j,s}}$} \\
\chi_{2,s} & \textrm{with probability $\frac{\pi_{2,s}}{\sum_{j \geq 1} \pi_{j,s}}$} \\
\vdots  & \\
\chi_{i,s} & \textrm{with probability $\frac{\pi_{i,s}}{\sum_{j \geq 1} \pi_{j,s}}$} \\
\vdots  & 
\end{array} \right.
\label{eq:chi_fading}
\end{align}
For each $s \in \mathcal{S}$, let $\mu'_s$ denote the resulting effective probability of a successful primary transmission 
in any state $(i, s)$ where $i \geq 1$ under this policy. Note that this is same for all states $(i, s)$ where $i \geq 1$ by the 
definition (\ref{eq:chi_fading}). Then, we have that:
\begin{align}
\mu'_s = \sum_{i \geq 1} \mu_{i,s} \frac{\pi_{i,s}}{\sum_{j \geq 1} \pi_{j, s}}
\label{eq:mu_s_fading}
\end{align}
Let $\pi_{i,s}'$ denote the steady-state probability of being in state $(i,s)$ under this alternate policy.
Since the system is stable under any stationary policy, total incoming rate = total outgoing rate. Thus, we get:
\begin{align}
\lambda_{pu} &= \sum_{s \in \mathcal{S}} \sum_{k \geq 1} \pi_{k,s}' \mu'_s = \sum_{s \in \mathcal{S}} \mu'_s \Bigg( \sum_{k \geq 1} \pi_{k,s}' \Bigg) \nonumber\\
&= \sum_{s \in \mathcal{S}} \Bigg[ \sum_{i \geq 1} \mu_{i,s} \frac{\pi_{i,s}}{\sum_{j \geq 1} \pi_{j, s}} \Bigg] \Bigg( \sum_{k \geq 1} \pi_{k,s}' \Bigg)
\label{eq:ipop_fading}
\end{align}
where we used (\ref{eq:mu_s_fading}) in the last step.
Since $S(t)$ is i.i.d., for any $s_1, s_2 \in \mathcal{S}$, we have that 
\begin{align*}
&\pi_0 q_{s1} +  \sum_{j \geq 1} \pi_{j, s1} = q_{s1}, \qquad \pi_0 q_{s2} +  \sum_{j \geq 1} \pi_{j, s2} = q_{s2} 
\end{align*}
Similarly, we have:
\begin{align*}
&\pi_0' q_{s1} +  \sum_{j \geq 1} \pi'_{j, s1} = q_{s1}, \qquad \pi_0' q_{s2} +  \sum_{j \geq 1} \pi'_{j, s2} = q_{s2} 
\end{align*}
Using this, for any $s_1, s_2 \in \mathcal{S}$, we have:
\begin{align}
\frac{\sum_{j \geq 1} \pi_{j, s1} }{\sum_{j \geq 1} \pi'_{j, s1}} = \frac{\sum_{j \geq 1} \pi_{j, s2} }{\sum_{j \geq 1} \pi'_{j, s2}} 
\end{align}
Using this in (\ref{eq:ipop_fading}), we have for each $\hat{s} \in \mathcal{S}$:
\begin{align}
\lambda_{pu} = \Bigg[ \sum_{s \in \mathcal{S}} \sum_{i \geq 1} \mu_{i,s} \pi_{i,s} \Bigg] \frac{\sum_{k \geq 1} \pi_{k,\hat{s}}'}{\sum_{j \geq 1} \pi_{j, \hat{s}}}  
= \lambda_{pu} \frac{\sum_{k \geq 1} \pi_{k,\hat{s}}'}{\sum_{j \geq 1} \pi_{j, \hat{s}}}
\end{align}
where we used (\ref{eq:eq1_new_fading}) in the last step.
This implies that $\sum_{k \geq 1} \pi_{k, \hat{s}}' = \sum_{j \geq 1} \pi_{j, \hat{s}}$ for every $\hat{s} \in \mathcal{S}$ 
and therefore $\pi_0' = \pi_0$.
Also, the average power incurred in cooperative transmissions under this alternate policy is given by:
\begin{align}
\overline{P}' &= \sum_{k \geq 1} \sum_{s \in \mathcal{S}} \pi_{k,s}' \mathbb{E}_{\chi'_s} \{P'_s\} \nonumber\\
&= \sum_{k \geq 1} \sum_{s \in \mathcal{S}} \pi_{k,s}'  
\Bigg( \sum_{i \geq 1} \mathbb{E}_{\chi_{i,s}} \{P_{i,s}\} \frac{\pi_{i,s}}{\sum_{j \geq 1} \pi_{j,s}} \Bigg) \nonumber\\
&=  \sum_{s \in \mathcal{S}}  \sum_{i \geq 1} \mathbb{E}_{\chi_{i,s}} \{P_{i,s}\} \pi_{i,s}
= \overline{P}
\end{align}
where we used the fact that $\sum_{k \geq 1} \pi_{k,s}' = \sum_{j \geq 1} \pi_{j,s}$ for all $s$.
Thus, if we choose $\chi' = \chi_0$ in state $i=0$ and choose $\chi_s'$ as defined in (\ref{eq:chi_fading}) in all states $(i, s)$ where $i \geq 1$, 
it can be seen that the alternate policy achieves the same time average
value of the objective (\ref{eq:time_avg_fading}) as the optimal policy.
This implies that to maximize (\ref{eq:time_avg_fading}), it is sufficient to optimize over the class of stationary policies that,
for each $s \in \mathcal{S}$, use the \emph{same} distribution for choosing $P_{i,s}$ for all states $(i, s)$ where $i \geq 1$. 
Denote this class by $\mathcal{R}'$. Using this and the fact that $\sum_{i \geq 1} \pi_{i,s}(r) = (1 - \pi_0(r))q_s$ for all $s$, 
(\ref{eq:time_avg_fading}) can be simplified as follows:
\begin{align}
\textrm{Maximize:} \;& [Q_{su}(t_k) \mathbb{E}\{\mu_{su}(P_0(r))\} - X_{su}(t_k) \expect{P_0(r)}] \pi_0(r) \nonumber\\
&- X_{su}(t_k) \sum_{s \in \mathcal{S}}  \expect{P_{s}(r)} (1 - \pi_0(r))q_s \nonumber \\
\textrm{Subject to:} \; & r \in \mathcal{R}'
\label{eq:time_avg2_fading}
\end{align}
where $\pi_0(r)$ is the resulting steady-state probability of being in state $0$ and where $\expect{P_{s}(r)}$ is the 
average power incurred in cooperative transmission in any state $(i,s)$ with $i \geq 1$.
Using the same arguments as before, the solution to (\ref{eq:time_avg2_fading}) can be obtained in two steps as follows.
We first compute the solution to (\ref{eq:time_avg3}) as before. Denoting its optimal value by $\theta^*$,
(\ref{eq:time_avg2_fading}) can be written as:
\begin{align}
\textrm{Maximize:} \;\; & \theta^* \pi_0(r) - X_{su}(t_k) \sum_{s \in \mathcal{S}} \expect{P_{s}(r)} (1 - \pi_0(r)) q_{s} \nonumber \\
\textrm{Subject to:} \;\; & r \in \mathcal{R}'
\label{eq:time_avg4_fading}
\end{align}


Using Little's Theorem, we have $\pi_0(r) = 1 - \frac{\lambda_{pu}}{\sum_{s \in \mathcal{S}} q_s \mathbb{E} \{\phi_s(P_{s}(r))\}}$. Using this and rearranging
the objective in (\ref{eq:time_avg4_fading}) and ignoring the constant terms, we have the following equivalent problem:
\begin{align}
\textrm{Maximize:} \;\; &\frac{ - \theta^* - X_{su}(t_k) \sum_{s \in \mathcal{S}} q_s \mathbb{E} \{P_{s}(r)\}}
   {\sum_{s \in \mathcal{S}} q_s \mathbb{E} \{\phi_s(P_{s}(r))\}} \nonumber \\
\textrm{Subject to:} \;\;& r \in \mathcal{R}'
\label{eq:time_avg5_fading}
\end{align}
It can be shown that it is sufficient to consider only \emph{deterministic} power allocations to solve (\ref{eq:time_avg5_fading}) (see, for example,
\cite[Section 7.3.2]{neely_new}). This yields the following 
problem:
\begin{align}
\textrm{Maximize:} \;\; &\frac{ - \theta^* - X_{su}(t_k) \sum_{s \in \mathcal{S}} q_s P_{s}}{\sum_{s \in \mathcal{S}} q_s \phi_s(P_{s})} \nonumber \\
\textrm{Subject to:} \;\;& P_{s} \in \mathcal{P} \; \textrm{for all} \; s \in \mathcal{S} 
\label{eq:time_avg6_fading}
\end{align}
Note that solving this problem does not require knowledge of $\lambda_{pu}$ or $\lambda_{su}$ 
and can be solved efficiently for general power allocation options $\mathcal{P}$.


\section{Simulations}
\label{section:femto_sim}

In this section, we evaluate the performance of the
\emph{Frame-Based-Drift-Plus-Penalty-Algorithm} using simulations.
We consider the network model as discussed in Sec. \ref{section:femto_basic}  with
one primary and one secondary user. The set $\mathcal{P}$ consists of only two options 
$\{0, P_{max} \}$.  We assume that $P_{avg} = 0.5$ and $P_{max} = 1$. 
We set $\phi_{nc} = 0.6$ and $\phi_c = 0.8$. For simplicity, we assume that
$\mu_{su}(P_{max}) = 1$.

\begin{figure}
\centering
\includegraphics[width=8cm, height=5cm, angle=0]{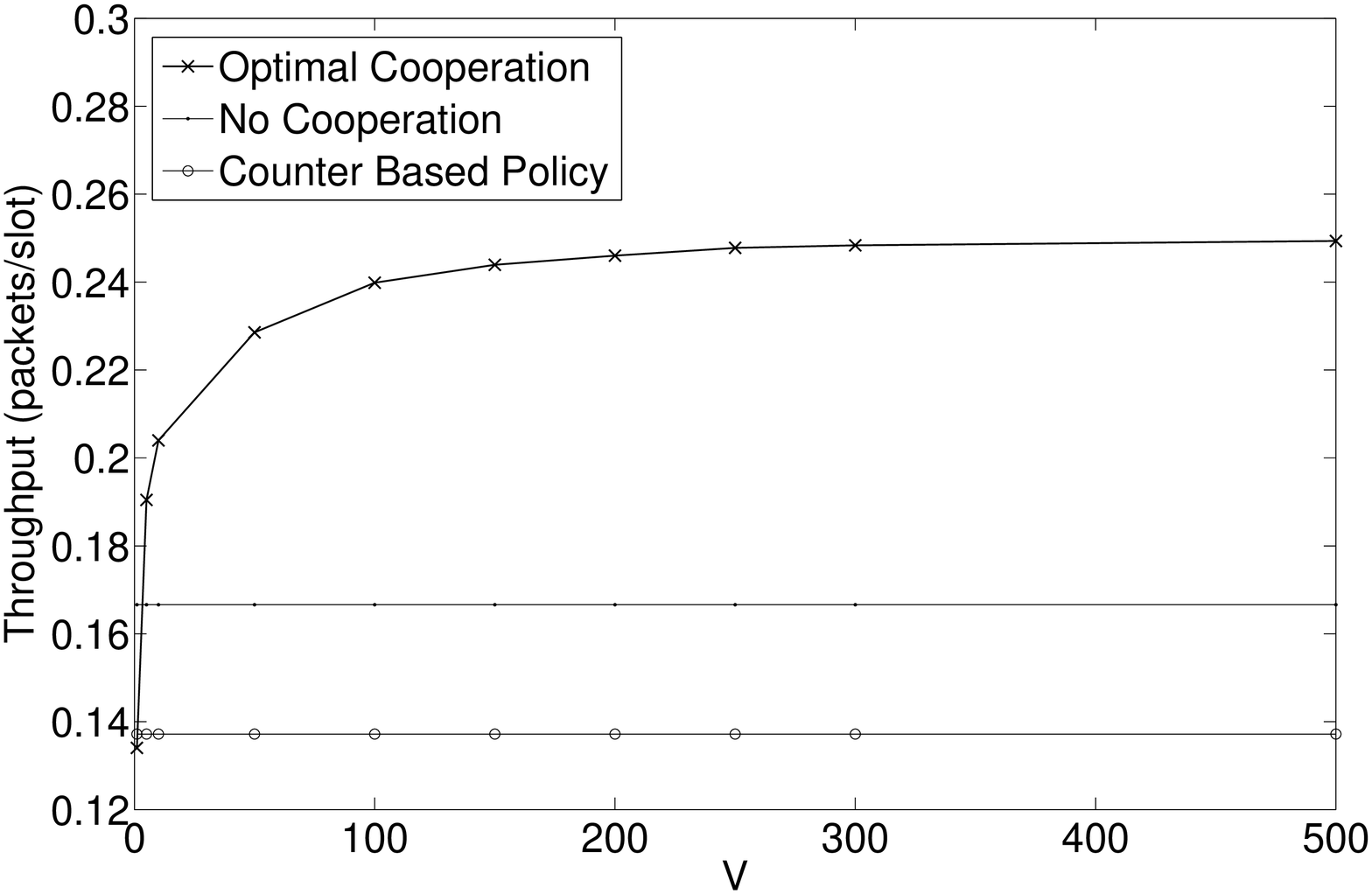}
\caption{Average Secondary User Throughput vs. V.}
\label{fig:femto_sim1}
\vspace{-0.2in}
\end{figure}

In the first set of simulations, we fix the input rates 
$\lambda_{pu} = \lambda_{su} = 0.5$ packets/slot. 
For these parameters, we can compute the optimal offline solution by linear programming. This yields the
maximum secondary user throughput as $0.25$ packets/slot.
We now simulate the \emph{Frame-Based-Drift-Plus-Penalty-Algorithm} for different values of the control
parameter $V$ over $1000$ frames.
In Fig. \ref{fig:femto_sim1}, we plot the average throughput achieved by the secondary user over this period. 
It can be seen that the average throughput increases with $V$ and converges to
the optimal value $0.25$ packets/slot, with the difference exhibiting a $O(1/V)$ behavior as predicted by Theorem \ref{thm:femto_performance}. 
In Fig. \ref{fig:femto_sim2}, we plot the average queue backlog of the secondary user over this period. It can be see that
the average queue backlog grows linearly in $V$, again  as predicted by Theorem \ref{thm:femto_performance}.
Also, for all $V$, the average secondary user power consumption over this period was found not to exceed $P_{avg} = 0.5$ units/slot.

For comparison, we also simulate three alternate algorithms. In the first algorithm ``No Cooperation'', the secondary user
never cooperates with the primary user and only attempts to maximize its throughput over the resulting idle periods.
The secondary user throughput under this algorithm was found to be $0.166$ packets/slot as shown in Fig. \ref{fig:femto_sim1}.
Note that using Little's Theorem, the resulting fraction of time the primary user is idle is 
$1 - {\lambda_{pu}}/{\phi_{nc}} = 1 - {0.5}/{0.6} = 0.166$. This limits the maximum secondary user throughput under the
``No Cooperation'' case to $0.166$ packets/slot.

In the second algorithm, we consider the ``Always Cooperate'' case where the secondary user
always cooperates with the primary user. For the example under consideration, this uses up all the
secondary user power and thus, the secondary user achieves zero throughput.

In the third algorithm ``Counter Based Policy'', a running average of the total secondary user power 
consumption so far is maintained. In each slot, the secondary user decides to transmit/cooperate only if
this running average is smaller than $P_{avg}$. The maximum secondary user throughput under this
algorithm was found to be $0.137$ packets/slot. This demonstrates that simply satisfying the
average power constraint is not sufficient to achieve maximum throughput. For example, it may be the case
that under the ``Counter Based Policy'', the running average condition is usually satisfied when the primary user is
busy. This causes the secondary user to cooperate. However, by the time the primary user next becomes idle, 
the running average exceeds $P_{avg}$ so that the secondary user does not transmit its own data. In contrast, the
\emph{Frame-Based-Drift-Plus-Penalty-Algorithm} is able to find the opportune moments to cooperate/transmit optimally.

\begin{figure}
\centering
\includegraphics[width=8cm, height=5cm, angle=0]{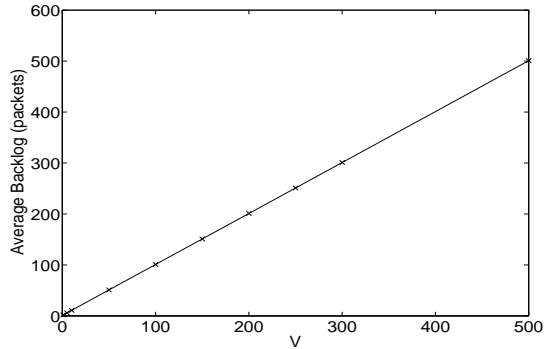}
\caption{Average Secondary User Queue Occupancy vs. V.}
\label{fig:femto_sim2}
\vspace{-0.2in}
\end{figure}

In the second set of simulations, we fix the input rate $\lambda_{su} = 0.8$ packets/slot, $V=500$, and
simulate the \emph{Frame-Based-Drift-Plus-Penalty-Algorithm} over $1000$ frames.
At the start of the simulation, we set $\lambda_{pu} = 0.4$ packets/slot. The values of the other parameters
remain the same. However, during the course of the simulation, we change 
$\lambda_{pu}$ to $0.2$ packets/slot after the first $350$ frames and then again to 
$0.55$ packets/slot after the first $700$ frames. In Figs. \ref{fig:femto_sim3} and \ref{fig:femto_sim4}, 
we plot the running average (over $100$ frames) of the secondary user throughput and the average power used for
cooperation. These show that the \emph{Frame-Based-Drift-Plus-Penalty-Algorithm} automatically
adapts to the changes in $\lambda_{pu}$. Further, it quickly approaches the optimal performance corresponding to
the new $\lambda_{pu}$ by adaptively spending more or less power (as required) on cooperation. For example,
when $\lambda_{pu}$ reduces to $0.2$ packets/slot after frame number $350$, the fraction of time the primary is
idle even with no cooperation is $1 - 0.2/0.6 = 0.66$. With $P_{avg}=0.5$, there is no need to cooperate anymore.
This is precisely what the \emph{Frame-Based-Drift-Plus-Penalty-Algorithm} does as shown in Fig. \ref{fig:femto_sim4}.
Similarly, when when $\lambda_{pu}$ increases to $0.55$ packets/slot after frame number $700$, 
the \emph{Frame-Based-Drift-Plus-Penalty-Algorithm} starts to spend more power on cooperative transmissions.

\section{Conclusions}
\label{section:conc}

In this paper, we studied the problem of opportunistic cooperation in a cognitive femtocell network.
Specifically, we considered the scenario where a secondary user
can cooperatively transmit with the primary user to increase
its transmission success probability. In return, the secondary
user can get more opportunities for transmitting its own data
when the primary user is idle. A key feature of this problem  
is that here, the evolution of the system state depends on the control actions taken by the secondary user. 
This dependence makes it a constrained Markov Decision Problem traditional solutions to which
require either extensive knowledge of the system dynamics or learning based approaches that suffer
from large convergence times. However, using the technique
of Lyaunov optimization, we designed a novel greedy and online
control algorithm that overcomes these challenges and is provably
optimal.

\begin{figure}
\centering
\includegraphics[width=8cm,  angle=0]{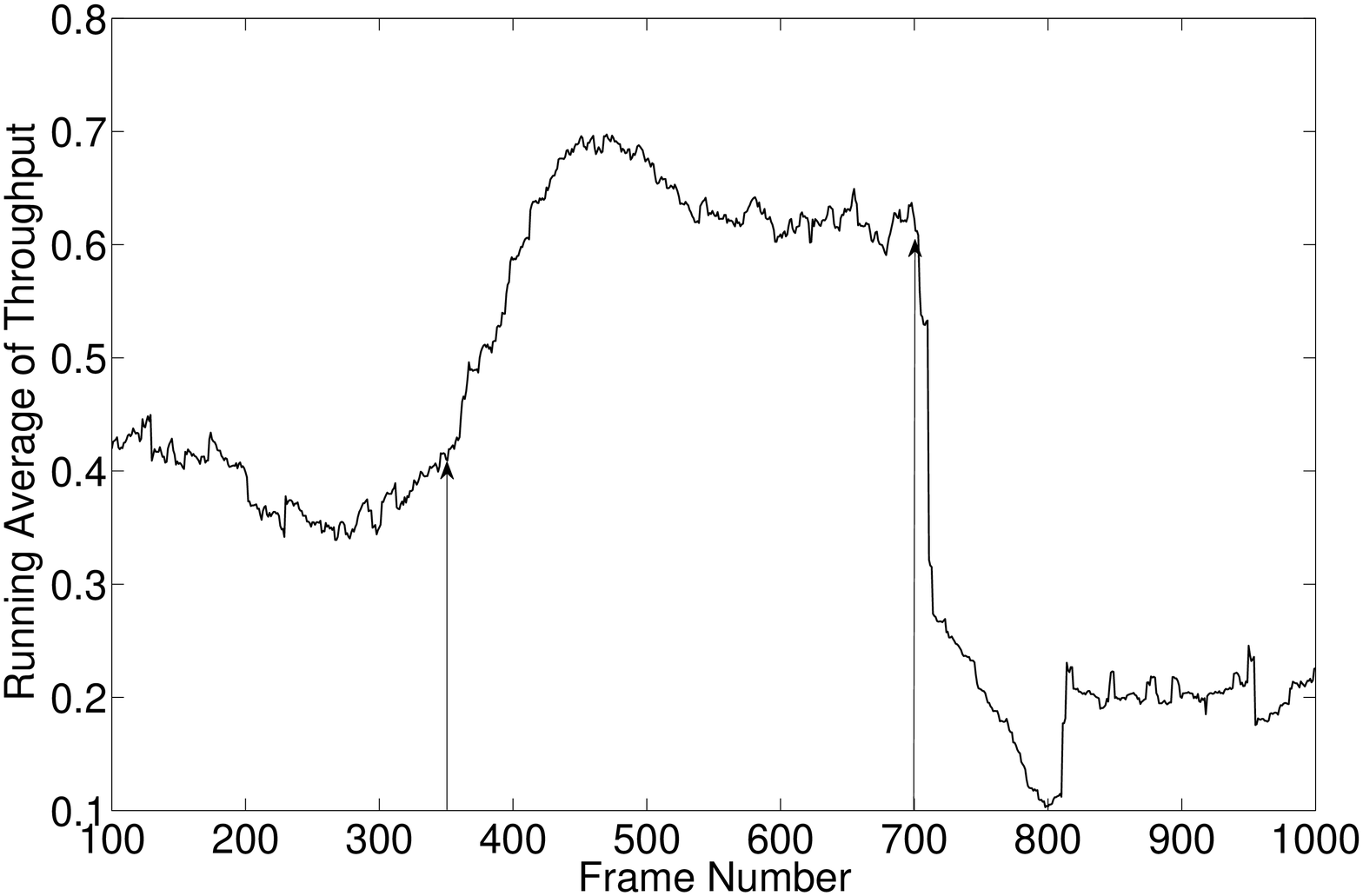}
\caption{Moving Average of Secondary User Throughput over Frames.}
\label{fig:femto_sim3}
\vspace{-0.2in}
\end{figure}


\section*{Appendix A \\ Proof of Lemma \ref{lem:adm_ctrl_bound_lem}}

Let $Q_{su}^{fab}(t)$ denote the queue backlog value under
the \emph{Frame-Based-Drift-Plus-Penalty-Algorithm} for all $t \in \{t_k, t_k+1, \ldots, t_{k+1}-1\}$.
Then, since the admission control decision (\ref{eq:femto_adm_ctrl}) 
 of the \emph{Frame-Based-Drift-Plus-Penalty-Algorithm} 
minimizes the term $(Q_{su}(t) - V) R_{su}(t)$ for all $Q_{su}(t)$, we have:

\begin{align}
&\expect{\sum_{t=t_k}^{t_{k+1}-1} (Q^{fab}_{su}(t) - V) R^{alt}_{su}(t) | \boldsymbol{Q}(t_k)}  \nonumber\\
&\geq \expect{\sum_{t=t_k}^{t_{k+1}-1} (Q^{fab}_{su}(t) - V) R^{fab}_{su}(t) | \boldsymbol{Q}(t_k)} 
\label{eq:ineq2_new}
\end{align}
Note that we are not implementing the admission control decisions of \emph{ALT} in the left hand side of the above.

Next, we make use of the following sample path relations in (\ref{eq:ineq2_new}) 
to prove (\ref{eq:adm_ctrl_bound}).
For all $t \in \{t_k, t_k+1, \ldots, t_{k+1}-1\}$, the following hold under any control algorithm:
\begin{align}
& Q_{su}(t_k) \geq Q_{su}(t) - (t - t_k)A_{max} 
\label{eq:sample1} \\
& Q_{su}(t_k) \leq Q_{su}(t) + (t - t_k)\mu_{max} 
\label{eq:sample2} 
\end{align}
(\ref{eq:sample1}) follows by noting that the maximum number of arrivals to the secondary user queue in
the interval $[t_k, \ldots, t)$ is at most $(t - t_k)A_{max}$. 
Similarly, (\ref{eq:sample2}) follows by noting that the maximum number of departures from the secondary user queue in
the interval $[t_k, \ldots, t)$ is at most $(t - t_k)\mu_{max}$.

\begin{figure}
\centering
\includegraphics[width=8cm,  angle=0]{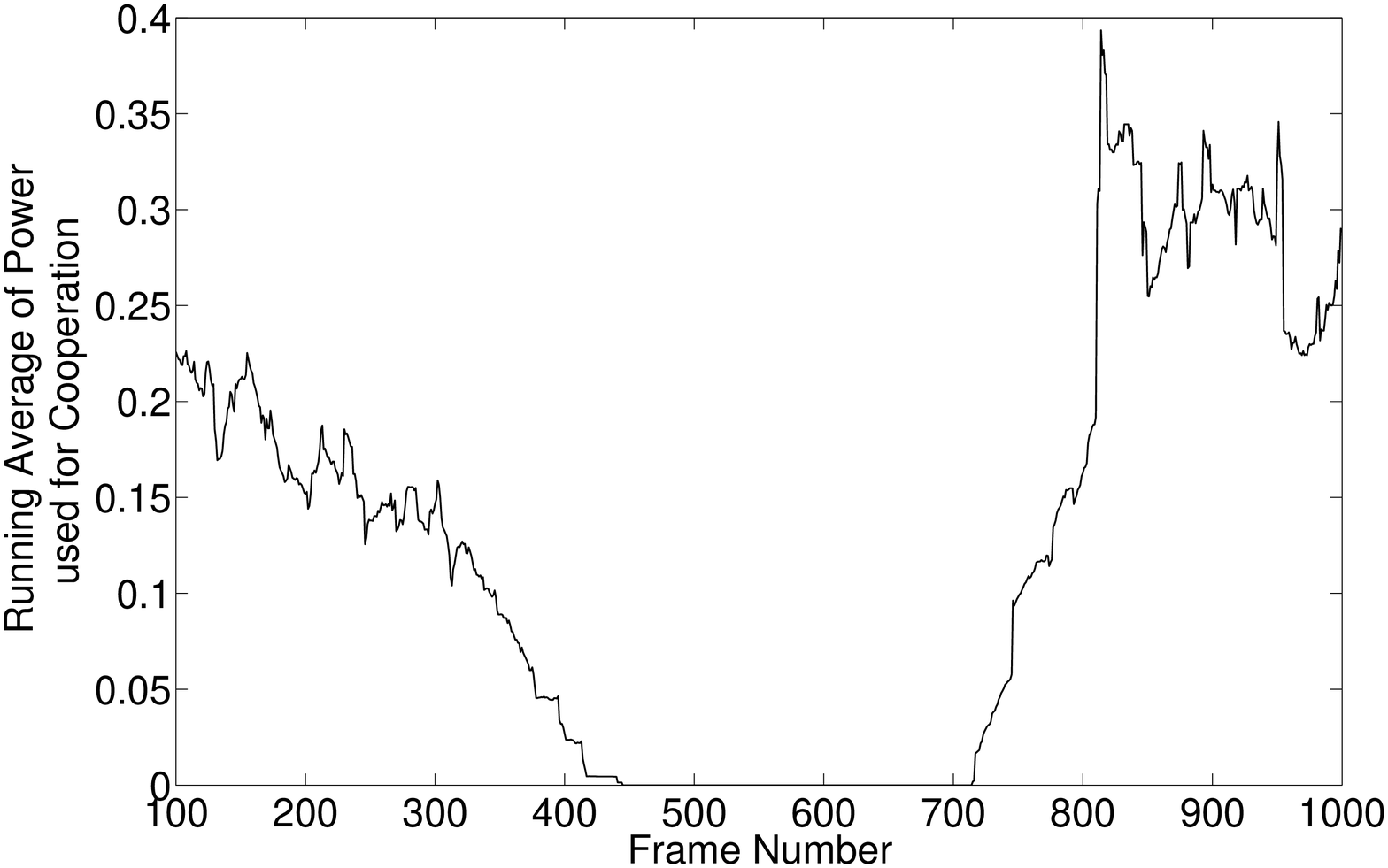}
\caption{Moving Average of Power used by the Secondary User for Cooperative Transmissions over Frames.}
\label{fig:femto_sim4}
\vspace{-0.2in}
\end{figure}

Using (\ref{eq:sample1}) in the left hand side of (\ref{eq:ineq2_new}) yields:
\begin{align*}
&\expect{\sum_{t=t_k}^{t_{k+1}-1} (Q_{su}^{fab}(t) - V) R_{su}^{alt}(t) | \boldsymbol{Q}(t_k)} \leq \\
&\expect{\sum_{t=t_k}^{t_{k+1}-1} (Q_{su}(t_k) - V) R^{alt}_{su}(t) | \boldsymbol{Q}(t_k)} \\
&+ \expect{\sum_{t=t_k}^{t_{k+1}-1} (t - t_k) A_{max} R_{su}^{alt}(t) | \boldsymbol{Q}(t_k)} 
\end{align*}
Using the fact that $R_{su}^{alt}(t) \leq A_{max}$ and $\sum_{t=t_k}^{t_{k+1}-1} (t - t_k) =\frac{T[k](T[k]-1)}{2}$, we get:
\begin{align}
&\expect{\sum_{t=t_k}^{t_{k+1}-1} (Q^{fab}_{su}(t) - V) R^{alt}_{su}(t) | \boldsymbol{Q}(t_k)} \leq \nonumber\\
&\expect{\sum_{t=t_k}^{t_{k+1}-1} (Q_{su}(t_k) - V) R^{alt}_{su}(t) | \boldsymbol{Q}(t_k)} 
+ \frac{DA^2_{max}}{2} 
\label{eq:ineq1_new}
\end{align}

Next, using (\ref{eq:sample2}) in the right hand side of (\ref{eq:ineq2_new}) yields:
\begin{align*}
& \expect{\sum_{t=t_k}^{t_{k+1}-1} (Q_{su}^{fab}(t) - V) R^{fab}_{su}(t) | \boldsymbol{Q}(t_k)} \geq \\
& \expect{\sum_{t=t_k}^{t_{k+1}-1} (Q_{su}(t_k) - V) R_{su}^{fab}(t) | \boldsymbol{Q}(t_k)} \\
& - \expect{\sum_{t=t_k}^{t_{k+1}-1} (t - t_k)\mu_{max}  R_{su}^{fab}(t) | \boldsymbol{Q}(t_k)}
\end{align*}
Again using the fact that $R_{su}^{fab}(t) \leq A_{max}$ and $\sum_{t=t_k}^{t_{k+1}-1} (t - t[k]) =\frac{T[k](T[k]-1)}{2}$, we get:
\begin{align}
&\expect{\sum_{t=t_k}^{t_{k+1}-1} (Q_{su}^{fab}(t) - V) R^{fab}_{su}(t) | \boldsymbol{Q}(t_k)} \geq \nonumber\\
&\expect{\sum_{t=t_k}^{t_{k+1}-1} (Q_{su}(t_k) - V) R^{fab}_{su}(t) | \boldsymbol{Q}(t_k)} 
- \frac{D\mu_{max} A_{max}}{2} 
\label{eq:ineq3_new}
\end{align}

Using (\ref{eq:ineq1_new}) and (\ref{eq:ineq3_new}) in (\ref{eq:ineq2_new}), we have:

\begin{align*}
&\expect{\sum_{t=t_k}^{t_{k+1}-1} (Q_{su}(t_k) - V) R^{alt}_{su}(t) | \boldsymbol{Q}(t_k)} \geq \nonumber \\
&\expect{\sum_{t=t_k}^{t_{k+1}-1} (Q_{su}(t_k) - V) R^{fab}_{su}(t) | \boldsymbol{Q}(t_k)} - C
\end{align*}

\section*{Appendix B \\ Proof of Theorem \ref{thm:femto_performance}, parts 2 and 3}


We prove parts (2) and (3) of Theorem \ref{thm:femto_performance} using the technique of Lyapunov optimization. 
Using (\ref{eq:femto_drift2}), a bound on the Lyapunov drift under 
the \emph{Frame-Based-Drift-Plus-Penalty-Algorithm} 
is given by:
\begin{align}
&\Delta(t_k) - V \expect{\sum_{t=t_k}^{t_{k+1}-1} R_{su}^{fab}(t) | \boldsymbol{Q}(t_k)} \leq  B + (Q_{su}(t_k) - V) \nonumber\\
&\times \expect{\sum_{t=t_k}^{t_{k+1}-1} R_{su}^{fab}(t) | \boldsymbol{Q}(t_k)} - X_{su}(t_k) \expect{T[k] P_{avg} | \boldsymbol{Q}(t_k)} \nonumber\\
&-  \expect{\sum_{t=t_k}^{t_{k+1}-1} (Q_{su}(t_k) \mu_{su}^{fab}(t) - X_{su}(t_k) P_{su}^{fab}(t)) | \boldsymbol{Q}(t_k)}
\label{eq:drift3}
\end{align}
Using Lemma \ref{lem:adm_ctrl_bound_lem}, we have that:
\begin{align*}
&\expect{\sum_{t=t_k}^{t_{k+1}-1} (Q_{su}(t_k) - V) R^{fab}_{su}(t) | \boldsymbol{Q}(t_k)} \leq \\
&C + \expect{\sum_{t=t_k}^{t_{k+1}-1} (Q_{su}(t_k) - V) R^{alt}_{su}(t) | \boldsymbol{Q}(t_k)}
\end{align*}
Next, note that under the \emph{ALT} algorithm, we have:
\begin{align*}
&\frac{\expect{\sum_{t=t_k}^{t_{k+1}-1} (Q_{su}(t_k) - V) R_{su}^{alt}(t) | \boldsymbol{Q}(t_k)}}{\expect{T[k] | \boldsymbol{Q}(t_k)}}  \\
&\leq \frac{\expect{\sum_{t=t_k}^{\hat{t}_{k+1}-1} (Q_{su}(t_k) - V) {R}_{su}^{stat}(t) | \boldsymbol{Q}(t_k)}}{\expect{\hat{T}[k] | \boldsymbol{Q}(t_k)}}
\end{align*}
To see this, we have two cases:
\begin{enumerate}
\item $Q_{su}(t_k) > V$: Then, $R_{su}^{alt}(t) = 0$ for all $t \in \{t_k, t_k+1, \ldots, t_{k+1} -1 \}$, so that the left hand side above is $0$ 
while the right hand side is $\geq 0$. Hence, the inequality follows.

\item $Q_{su}(t_k) \leq V$: Then, $R_{su}^{alt}(t) = A_{su}(t)$ for all $t \in \{t_k, t_k+1, \ldots, t_{k+1} -1 \}$, so that the left hand side
becomes $(Q_{su}(t_k) - V) \lambda_{su}$ while the right hand side cannot be smaller than $(Q_{su}(t_k) - V) \lambda_{su}$. 

\end{enumerate}
Combining these, we get:
\begin{align*}
&{(Q_{su}(t_k) - V) \expect{\sum_{t=t_k}^{t_{k+1}-1} R_{su}^{fab}(t) | \boldsymbol{Q}(t_k)}} \leq C \\ 
& + (Q_{su}(t_k) - V) \expect{\sum_{t=t_k}^{\hat{t}_{k+1}-1} {R}_{su}^{stat}(t) | \boldsymbol{Q}(t_k)} 
\frac{\expect{T[k] | \boldsymbol{Q}(t_k)}}{\expect{\hat{T}[k] | \boldsymbol{Q}(t_k)}}
\end{align*}

Finally, since the resource allocation part of the \emph{Frame-Based-Drift-Plus-Penalty-Algorithm}
maximizes the ratio in (\ref{eq:femto_resource_alloc}), we have: 
\begin{align*}
&{\expect{\sum_{t=t_k}^{t_{k+1}-1} (Q_{su}(t_k)\mu_{su}^{fab}(t) - X_{su}(t_k) P_{su}^{fab}(t)) | \boldsymbol{Q}(t_k)}} \geq \\
& \expect{\sum_{t=t_k}^{\hat{t}_{k+1}-1} (Q_{su}(t_k){\mu}_{su}^{stat}(t) - X_{su}(t_k){P}_{su}^{stat}(t)) | \boldsymbol{Q}(t_k)} \\
&\times \frac{\expect{T[k] | \boldsymbol{Q}(t_k)}}{\expect{\hat{T}[k] | \boldsymbol{Q}(t_k)}}
\end{align*}

Using these in (\ref{eq:drift3}), we have:
\begin{align*}
&\Delta(t_k) - V \expect{\sum_{t=t_k}^{t_{k+1}-1} R_{su}^{fab}(t) | \boldsymbol{Q}(t_k)} \leq  B + C \\ 
&+ (Q_{su}(t_k) - V) \expect{\sum_{t=t_k}^{\hat{t}_{k+1}-1} {R}_{su}^{stat}(t) | \boldsymbol{Q}(t_k)} 
\frac{\expect{T[k] | \boldsymbol{Q}(t_k)}}{\expect{\hat{T}[k] | \boldsymbol{Q}(t_k)}} \nonumber\\
& - \expect{\sum_{t=t_k}^{\hat{t}_{k+1}-1} (Q_{su}(t_k){\mu}_{su}^{stat}(t) - X_{su}(t_k) {P}_{su}^{stat}(t)) | \boldsymbol{Q}(t_k)} \\
& \times \frac{\expect{T[k] | \boldsymbol{Q}(t_k)}}{\expect{\hat{T}[k] | \boldsymbol{Q}(t_k)}} - X_{su}(t_k) \expect{T[k] P_{avg} | \boldsymbol{Q}(t_k)} 
\end{align*}

Using (\ref{eq:iid_1})-(\ref{eq:iid_3}) in the inequality above, we get:
\begin{align}
\Delta(t_k) &- V \expect{\sum_{t=t_k}^{t_{k+1}-1} R_{su}^{fab}(t) | \boldsymbol{Q}(t_k)} \leq B + C \nonumber \\
&\qquad - V \upsilon^* \expect{T[k] | \boldsymbol{Q}(t_k)}
\label{eq:drift5}
\end{align}

To prove (\ref{eq:x_max1}), we rearrange (\ref{eq:drift5}) to get:
\begin{align*}
&\Delta(t_k) \leq B + C - V \upsilon^* \expect{T[k] | \boldsymbol{Q}(t_k)} \\& 
+ V \expect{\sum_{t=t_k}^{t_{k+1}-1} R_{su}^{fab}(t) | \boldsymbol{Q}(t_k)} \leq B + C + V T_{max} A_{max}
\end{align*}
(\ref{eq:x_max1}) now follows from Theorem 4.1 of \cite{neely_new}. Since
$X_{su}(t_k)$ is mean rate stable,  (\ref{eq:x_max2}) follows from Theorem 2.5(b) of \cite{neely_new}.
To prove (\ref{eq:femto_utility_bound}), we take expectations of both sides of (\ref{eq:drift5}) to get:
\begin{align*}
\expect{L(\boldsymbol{Q}(t_{k+1}))} &- \expect{L(\boldsymbol{Q}(t_k))} - V \expect{\sum_{t=t_k}^{t_{k+1}-1} R_{su}^{fab}(t)} \\
&\leq B + C - V \upsilon^* \expect{T[k]}
\end{align*}

Summing over $k \in \{1, 2, \ldots, K \}$, dividing by $V$, and rearranging yields:
\begin{align*}
&\sum_{k=1}^K \expect{\sum_{t=t_k}^{t_{k+1}-1} R_{su}^{fab}(t)}  \geq  
\upsilon^* \sum_{k=1}^K \expect{T[k]} - \frac{(B + C)K}{V} 
\end{align*}
where we used that fact that $\expect{L(\boldsymbol{Q}(t_{K+1}))} \geq 0$ and $\expect{L(\boldsymbol{Q}(t_1))} = 0$. From this, we have:
\begin{align*}
\frac{\sum_{k=1}^K \expect{\sum_{t=t_k}^{t_{k+1}-1} R_{su}^{fab}(t)}}{\sum_{k=1}^K \expect{T[k]}}  
&\geq  \upsilon^* - \frac{(B + C)K}{V \sum_{k=1}^K \expect{T[k]}}  \\ 
&\geq \upsilon^* - \frac{B + C}{V T_{min}}
\end{align*}
since $\sum_{k=1}^K \expect{T[k]} \geq K T_{min}$. This proves (\ref{eq:femto_utility_bound}).

\section*{Appendix C \\ Computing D}

Here, we compute a finite $D$ that satisfies (\ref{eq:second_moment}). 
First, note that $\expect{T^2[k]}$ would be maximum
when the secondary user never cooperates. Next, let $I[k]$ and $B[k]$ denote the lengths of the primary user idle and busy periods,
respectively, in the $k^{th}$ frame. Thus, we have $T[k] = I[k] + B[k]$. 

In the following, we drop $[k]$ from the notation 
for convenience. Using the independence of $I$ and $B$, we have:
\begin{align*}
\expect{T^2} = \expect{I^2} + \expect{B^2} + 2 \expect{I} \expect{B}
\end{align*}
We note that $I$ is a geometric r.v. with parameter $\lambda_{pu}$. Thus, 
$\expect{I} = 1/\lambda_{pu}$ and $\expect{I^2} = (2 - \lambda_{pu})/\lambda_{pu}^2$.
To calculate $\expect{B}$, we apply Little's Theorem to get:
\begin{align*}
\expect{I} = \Big(1 - \frac{\lambda_{pu}}{\phi_{nc}} \Big) (\expect{I} + \expect{B})	
\end{align*}
This yields $\expect{B} = 1/(\phi_{nc} - \lambda_{pu})$. To calculate $\expect{B^2}$, we
use the observation that changing the service order of packets in the primary queue to preemptive LIFO 
does not change the length of the busy period $B$. However, with LIFO scheduling, $B$ now equals the duration that the first
packet stays in the queue. Next, suppose there are $N$ packets that interrupt the service of the first packet.
Let these be indexed as $\{1, 2, \ldots, N\}$.
We can relate $B$ to the service time $X$ of the first packet and the durations for which all these other packets 
stay in the queue as follows:
\begin{align}
B = X + \sum_{i=1}^N B_i
\label{eq:b_k}
\end{align}
Here, $B_i$ denotes the duration for which packet $i$ stays in the queue. 
Using the memoryless property of the i.i.d. arrival process of the primary packets as well as the
i.i.d. nature of the service times, it follows that all the r.v.'s $B_i$ are i.i.d. with the \emph{same}
distribution as $B$. Further, they are independent of $N$.
Squaring (\ref{eq:b_k}) and taking expectations, we get:
\begin{align}
\expect{B^2} &= \expect{X^2} + 2 \expect{X} \expect{N} \expect{B} \nonumber\\&
+ \expect{\Big(\sum_{i=1}^N B_i\Big)^2} 
\label{eq:b_squared}
\end{align}
Note that $X$ is a geometric r.v. with parameter $\phi_{nc}$. Thus $\expect{X} = 1/\phi_{nc}$ and 
$\expect{X^2} = (2 - \phi_{nc})/\phi_{nc}^2$. Also, $\expect{N} = \lambda_{pu} \expect{X} = \lambda_{pu}/\phi_{nc}$.
Using these in (\ref{eq:b_squared}), we have:
\begin{align*}
\expect{B^2} = \frac{(2 - \phi_{nc})}{\phi_{nc}^2} + \frac{2 \lambda_{pu}}{\phi_{nc}^2 (\phi_{nc} - \lambda_{pu})} + \expect{\Big(\sum_{i=1}^N B_i\Big)^2} 
\end{align*}
To calculate the last term, we have:
\begin{align*}
&\expect{\Big(\sum_{i=1}^N B_i\Big)^2} = \expect{\sum_{i=1}^N B_i^2} + 2 \expect{\sum_{i \neq j} B_i B_j} \\
&\qquad = \expect{N} \expect{B^2} + 2 (\expect{B})^2(\expect{N^2} -  \expect{N})
\end{align*}
Note that given $X=x$, $N$ is a binomial r.v. with parameters $(x, \lambda_{pu})$. Thus, we have:
\begin{align*}
\expect{N^2} &= \sum_{x \geq 1} \expect{N^2 | X=x} \textrm{Prob}[X=x] \\
&= \sum_{x \geq 1} \Big[(x \lambda_{pu})^2 + x \lambda_{pu}(1 - \lambda_{pu})\Big] (1-\phi_{nc})^{x-1}\phi_{nc} \\
&= \lambda_{pu}^2  \sum_{x \geq 1} x^2 \phi_{nc} (1 -\phi_{nc})^{x-1} \\
&+ \lambda_{pu} (1 - \lambda_{pu}) \sum_{x \geq 1} x \phi_{nc} (1 -\phi_{nc})^{x-1} \\
&= \lambda_{pu}^2 \frac{(2 - \phi_{nc})}{\phi_{nc}^2} + \lambda_{pu} (1 - \lambda_{pu}) \frac{1}{\phi_{nc}}
\end{align*}
Using this, we have:
\begin{align*}
&\expect{\Big(\sum_{i=1}^N B_i\Big)^2} \\
& = \frac{\lambda_{pu}}{\phi_{nc}} \expect{B^2} + 2  \Big(\frac{1}{\phi_{nc} - \lambda_{pu}}\Big)^2 (\expect{N^2} -  \expect{N}) \\
&=  \frac{\lambda_{pu}}{\phi_{nc}} \expect{B^2} + 2  \Big(\frac{1}{\phi_{nc} - \lambda_{pu}}\Big)^2 \Big(\frac{2 \lambda_{pu}^2 (1 - \phi_{nc})}{\phi_{nc}^2}\Big)
\end{align*}
Using this, we have:
\begin{align*}
\expect{B^2} &= \frac{(2 - \phi_{nc})}{\phi_{nc}^2} + \frac{2 \lambda_{pu}}{\phi_{nc}^2 (\phi_{nc} - \lambda_{pu})} +
\frac{\lambda_{pu}}{\phi_{nc}} \expect{B^2} \\
&+ 2  \Big(\frac{1}{\phi_{nc} - \lambda_{pu}}\Big)^2 \Big(\frac{2 \lambda_{pu}^2 (1 - \phi_{nc})}{\phi_{nc}^2}\Big)
\end{align*}
Simplifying this yields:
\begin{align*}
&\expect{B^2} \nonumber\\
&= \frac{(2 - \phi_{nc})}{\phi_{nc} (\phi_{nc} - \lambda_{pu})} + \frac{2 \lambda_{pu}}{\phi_{nc} (\phi_{nc} - \lambda_{pu})^2}
+ \frac{4 \lambda_{pu}^2 (1 - \phi_{nc})} {\phi_{nc} (\phi_{nc} - \lambda_{pu})^3}
\end{align*}


\begin{thebibliography}{27}


\bibitem{cognitive-survey} I. F. Akyildiz, W.-Y. Lee, M. C. Vuran, and S. Mohanty. 
NeXt generation/dynamic spectrum access/cognitive radio wireless networks: A survey.
\emph{Comput. Netw.}, 50:2127-2159, Sept. 2006.


\bibitem{zhao-survey}
Q. Zhao and B. Sadler. A survey of dynamic spectrum access.
\emph{IEEE Signal Processing Magazine}, 24(3):79-89, May 2007.

\bibitem{CNC}
R. Urgaonkar and M. J. Neely. 
Opportunistic scheduling with reliability guarantees in cognitive radio networks. 
\emph{IEEE Trans. Mobile Computing}, 8(6):766-777, June 2009.


\bibitem{goldsmith}
A. J. Goldsmith, S. A. Jafar, I. Maric, and S. Srinivasa. 
Breaking spectrum gridlock with cognitive radios: An information theoretic perspective. \emph{Proc. of the IEEE}, 
97(5):894-914, May 2009.

\bibitem{carleial}
A. Carleial. Interference channels. \emph{IEEE Trans. Inform. Theory}, 24(1):60-70, Jan. 1978. 

\bibitem{han}
T. Han and K. Kobayashi. A new achievable rate region for the interference 
channel. \emph{IEEE Trans. Inform. Theory}, 27(1):49-60, Jan. 1981.

\bibitem{cover_gamal}
T. Cover and A. E. Gamal. Capacity theorems for the relay channel. 
\emph{IEEE Trans. Inform. Theory}, 25(5):572-584, Sep. 1979. 

\bibitem{cover}
T. M. Cover and J. A. Thomas. \emph{Elements of Information Theory}. 
New York: John Wiley \& Sons, Inc., 1991.


\bibitem{SBNS07}
O. Simeone, Y. Bar-Ness, and U. Spagnolini. Stable throughput of cognitive 
radios with and without relaying capability. \emph{IEEE Trans. 
Communications}, 55(12):2351-2360, Dec. 2007. 

\bibitem{simeone}
O. Simeone, I. Stanojev, S. Savazzi, Y. Bar-Ness, U. Spagnolini, and R. Pickholtz.
Spectrum leasing to cooperating secondary ad hoc networks. 
\emph{IEEE JSAC Special Issue on Cognitive Radio: Theory and Applications}, 26(1):203-213, Jan. 2008.

\bibitem{ZZ09}
J. Zhang and Q. Zhang. Stackelberg game for utility-based cooperative 
cognitive radio networks. \emph{Proc. ACM MobiHoc}, May 2009.

\bibitem{kriki}
I. Krikidis, J. N. Laneman, J. Thompson, and S. McLaughlin. 
Protocol design and throughput analysis for multi-user cognitive cooperative systems. 
\emph{IEEE Trans. Wireless Commun.}, 8(9):4740-4751, Sept. 2009.


\bibitem{brong}
B. Rong, I. Krikidis, and A. Ephremides. 
Network-level cooperation with enhancements based on the physical layer.
\emph{IEEE Information Theory Workshop}, Cairo, Egypt, Jan. 2010.


\bibitem{marco}
M. Levorato, U. Mitra, and M. Zorzi.
Cognitive interference management in retransmission-based wireless networks.
\emph{Proc. 47th Allerton Conference on Communication,
Control, and Computing}, Sept. 2009.


\bibitem{gur}
G. Gur, S. Bayhan, and F. Alagoz. Cognitive femtocell networks: An overlay 
architecture for localized dynamic spectrum access. \emph{IEEE Wireless 
Communications}, 17(4):62-70, Aug. 2010.


\bibitem{JL10}
J. Jin and B. Li. Cooperative resource management in cognitive wimax with 
femto cells.  \emph{Proc. IEEE INFOCOM}, March 2010.


\bibitem{neely-NOW}
L. Georgiadis, M. J. Neely, and L. Tassiulas. Resource allocation and
cross-layer control in wireless networks. \emph{Foundations and
Trends in Networking}, 1(1):1-149, 2006.

\bibitem{neely_mdp}
M. J. Neely. Stochastic optimization for markov modulated networks with application to delay constrained wireless scheduling.
\emph{IEEE Conference on Decision and Control}, Dec. 2009.


\bibitem{chih-ping}
C. Li and M. J. Neely. Network utility maximization over partially observable markovian channels. \emph{arXiv:1008.3421v1}, Aug. 2010.


\bibitem{asilomar10}
M. J. Neely. Dynamic optimization and learning for renewal systems. \emph{Proc. Asilomar Conference}, Nov. 2010.


\bibitem{neely_new}
M. J. Neely. \emph{Stochastic Network Optimization with Application to
Communication \& Queueing Systems}. Morgan\&Claypool, 2010.


\bibitem{neely-energy}
M. J. Neely. Energy optimal control for time varying wireless networks. 
\emph{IEEE Trans. Inform. Theory}, 52(7):2915-2934, July 2006.

\bibitem{bertsekas}
D. P. Bertsekas. \emph{Dynamic Programming and Optimal Control, vols. 1 \& 2}, Belmont, MA: Athena Scientific, 2007.


\bibitem{altman}
E. Altman. \emph{Constrained Markov Decision Processes}. 
Boca Raton, FL: Chapman and Hall/CRC Press, 1999.

\bibitem{puterman}
M. L. Puterman. \emph{Markov Decision Processes}. John Wiley \& Sons, 2005.

\bibitem{bertsekas2} 
D. P. Bertsekas and J. N. Tsitsiklis. \emph{Neuro-Dynamic Programming}. 
Belmont, MA: Athena Scientific, 1996.



\bibitem{gallager}
R. Gallager. \emph{Discrete Stochastic Processes}.
Kluwer Academic Publishers, Boston, 1996.



\end{thebibliography}
\end{document}